\newtheorem*{lemma*}{Lemma}
\newtheorem{theorem}{Theorem}[section]
\newtheorem{proposition}[theorem]{Proposition}
\newtheorem{lemma}[theorem]{Lemma}
\newtheorem*{thma}{Theorem A}
\newtheorem*{thmb}{Theorem B}
\newtheorem{corollary}[theorem]{Corollary}
\theoremstyle{definition}
\newtheorem{claim}[theorem]{Claim}
\newtheorem*{claim*}{Claim}
\newtheorem{definition}[theorem]{Definition}
\newtheorem{example}[theorem]{Example}
\theoremstyle{remark}
\newtheorem{remark}[theorem]{Remark}
\newcommand{\ind}{\textup{Ind}}
\newcommand{\supp}{\textup{Supp}}
\newcommand{\ev}{\textup{ev}}
\newcommand{\diam}{\textup{diameter}}
\newcommand{\cl}{ \textup{C}\ell} 
\newcommand{\ccl}{\hspace{1pt}\mathbb C\ell}  
\newcommand{\spin}{\textup{spin}}
\newcommand{\pos}{\textup{Pos}^\spin}
\newcommand{\sn}{\varepsilon}
\begin{document}
\title{Positive scalar curvature, higher rho invariants and localization algebras}

\author{Zhizhang Xie\thanks{Email: \texttt{xie@math.tamu.edu}} }
\author{Guoliang Yu\thanks{Email: \texttt{guoliangyu@math.tamu.edu}; partially supported by the US National Science Foundation.}}
\affil{Department of Mathematics, Texas A\&M University}
\date{}

\maketitle

\begin{abstract}
In this paper, we use localization algebras to study higher rho invariants of closed spin manifolds with positive scalar curvature metrics. The higher rho invariant is a secondary invariant and is closely related to positive scalar curvature problems. The main result of the paper connects the higher index of the Dirac operator on a spin manifold with boundary to the higher rho invariant of the Dirac operator on the boundary,  where the boundary is endowed with a positive scalar curvature metric. Our result extends a theorem of Piazza and Schick \cite[Theorem 1.17]{Piazza:2012fk}. 
\end{abstract}

\section{Introduction}

Ever since the index theorem of Atiyah and Singer \cite{MAIS63}, the main goal of index theory has been to associate (topological or geometric) invariants to operators on manifolds. The Atiyah-Singer index theorem states that the analytic index of an elliptic operator on a closed manifold (i.e. compact manifold without boundary) is equal to the topological index of this operator, where the latter can be expressed in terms of topological information of the underlying manifold. The corresponding index theorem for manifolds with boundary was proved by Atiyah, Patodi and Singer \cite{APS73b}. Due to the presence of the boundary, a secondary invariant naturally appears in the index formula. This secondary invariant depends only on the boundary and is called the eta invariant \cite{A-P-S75a}. 

When one deals with (closed) manifolds that have nontrivial fundamental groups, it turns out that one has a more refined index theory, where the index takes values in the $K$-theory group of certain $C^\ast$-algebra of the fundamental group. This is now referred to as the higher index theory. One of its main motivations is the Novikov conjecture, which states that higher signatures are homotopy invariants of manifolds. Using techniques from higher index theory to study the Novikov conjecture traces back to the work of Mi{\v{s}}{\v{c}}enko \cite{MR0362407}. With the seminal work of Kasparov \cite{GK88}, and Connes and Moscovici \cite{CM90}, this line of development has turned out to be fruitful and fundamental in the study of the Novikov conjecture. Further development was pursued by many authors, as part of the noncommutative geometry program initiated by Connes \cite{AC85, AC94}.  

As the higher index theory for closed manifolds has important applications to topology and geometry, one naturally asks whether the same works for manifolds with boundary. In other words, we would like to have a higher version of the Atiyah-Patodi-Singer index theorem for manifolds with boundary. This turns out to be a difficult task and we are  still far away from having a complete picture. In particular, a higher version of the eta invariant arises naturally as a secondary invariant. However, we do not know how to define it in general.  This higher eta invariant was first studied by Lott \cite{JLott92}. For a fixed closed spin manifold $M$, the higher eta invariant is an obstruction for two positive scalar curvature metrics on $M$ to lie in the the same connected component of $\mathcal R^+(M)$, the space of all positive scalar curvature metrics on $M$. 

The connection of higher index theory to positive scalar curvature problems was already made apparent in the work of Rosenberg (cf. \cite{JR83}). The applications of the higher eta invariant to the study of  positive scalar curvature metrics on manifolds can be found in the work of Leitchnam and Piazza (cf. \cite{ELPP01}).   The higher rho invariant is a variant of the higher eta invariant. It was also first investigated by Lott in the cyclic cohomological setting \cite{JLott92} (see also Weinberger's paper for a more topological approach \cite{MR1707352}). In a recent paper of Piazza and Schick \cite{Piazza:2012fk}, they studied the higher rho invariant on spin manifolds equipped with positive scalar curvature metrics, in the $K$-theoretic setting. The higher rho invariant was first introduced by Higson and Roe \cite[Definition 7.1]{MR2761858}. In particular,   their higher rho invariant lies in the $K$-theory group of a certain $C^\ast$-algebra.  

In this paper, we use localization algebras (introduced by the second author \cite{MR1451759,GY98})  to study the higher rho invariant on spin manifolds equipped with positive scalar curvature metrics. Let $N$ be a spin manifold with boundary, where the boundary $\partial N$ is endowed with a positive scalar curvature metric.  In the main theorem of this paper, we show that the $K$-theoretic ``boundary'' of the higher index class of the Dirac operator on $N$ is identical to the higher rho invariant of the Dirac operator on  $\partial N$.  More generally, let $M$ be an $m$-dimensional complete spin manifold with boundary $\partial M$ such that
\begin{enumerate}[(i)]
\item the metric on $M$ has product structure near $\partial M$ and its restriction on $\partial M$, denoted by $h$, has positive scalar curvature;
\item there is a proper and cocompact isometric action of a discrete group $\Gamma$ on $M$;
\item the action of $\Gamma$ preserves the spin structure of $M$. 
\end{enumerate}  
We denote the associated Dirac operator on $M$ by $D_{M}$ and the associated Dirac operator on $\partial M$ by $D_{\partial M}$. With the positive scalar curvature metric $h$ on the boundary $\partial M$, we can naturally define the higher index class $\ind(D_M)$ of $D_M$ and the higher rho invariant $\rho(D_{\partial M}, h)$ of $D_{\partial M}$ (see Section $\ref{sec:roelocal}$ and $\ref{sec:indexmap}$ for the precise definitions). This higher rho invariant $\rho(D_{\partial M}, h)$ lives in the $K$-theory group $K_{m-1}(C^\ast_{L,0}(\partial M)^\Gamma) $. Here $C^\ast_{L,0}(\partial M)^\Gamma$ is the kernel of the evaluation map $\ev: C_L^\ast(\partial M)^\Gamma \to C^\ast(\partial M)^\Gamma$ (see Section $\ref{sec:roelocal}$ and $\ref{sec:indexmap}$ below for the precise definitions). In fact, for a proper metric space $X$ equipped with a proper and cocompact isometric action of a discrete group $\Gamma$, we have $K_{n+1}(C^\ast_L(X)^\Gamma) \cong K_{n+1}^\Gamma(X)$, where the latter group is the equivariant $K$-homology with $\Gamma$-compact supports of $X$ \cite[Theorem 3.2]{MR1451759}. Moreover, $K_{n+1}(C_{L,0}^\ast(X)^\Gamma) \cong K_{n}(D^\ast(X)^\Gamma)$, where $D^\ast(X)^\Gamma$ is Roe's structure algebra of $X$ (see Section $\ref{sec:roelocal}$ for the precise definition).  Notice that the short exact sequence of $C^\ast$-algebras
\[  0\to C^\ast_{L,0}(M)^\Gamma \to C^\ast_L(M)^\Gamma \to C^\ast(M)^\Gamma \to 0\]
induces the following long exact sequence in $K$-theory:
\[ \cdots \to K_i(C_L^\ast(M)^\Gamma) \to  K_i(C^\ast(M)^\Gamma) \xrightarrow{\partial_i } K_{i-1}(C^\ast_{L,0}(M)^\Gamma) \to  K_{i-1}(C^\ast_L(M)^\Gamma) \to \cdots. \]
Also, by functoriality, we have a natural homomorphism 
\[ \iota_\ast: K_{m-1}(C^\ast_{L,0}(\partial M)^\Gamma)  \to K_{m-1}(C^\ast_{L,0}(M)^\Gamma) \]
induced by the inclusion map $\iota: \partial M \hookrightarrow M$. With the above notation, we have  the following main theorem of the paper. 
\begin{thma} 
\[ \partial_m(\ind(D_{M}) )  = \iota_\ast( \rho(D_{\partial M}, h)) \]
in $K_{m-1}(C^\ast_{L,0}(M)^\Gamma)$. 
\end{thma} 
 This theorem extends a theorem of Piazza and Schick \cite[Theorem 1.17]{Piazza:2012fk} to all dimensions, and to both complex and real cases. As an immediate application, one sees that nonvanishing of the higher rho invariant is an obstruction to extension of the positive scalar curvature metric from the boundary to the whole manifold (cf. Corollary $\ref{cor:ext}$ below). Moreover, the higher rho invariant can be used to distinguish whether or not two positive scalar curvature metrics are connected by a path of positive scalar curvature metrics (cf. Corollary $\ref{cor:diff}$ below). In a similar context, these have already appeared in the work of Lott \cite{JLott92}, Botvinnik and Gilkey \cite{MR1339924}, and Leichtnam and Piazza \cite{ELPP01}.

We shall also use the theorem above to map Stolz' positive scalar curvature exact sequence to the exact sequence of Higson and Roe.  Recall that Stolz introduced in \cite{SS95}  the following positive scalar curvature exact sequence 
\[ 
\xymatrix{  \ar[r] &\Omega^\spin_{n+1}(B\Gamma) \ar[r] & R_{n+1}^{\spin}(B\Gamma) \ar[r]  & \pos_n(B\Gamma) \ar[r]   &\Omega_n^\spin (B\Gamma) \ar[r]  &  }
\] 
where $B\Gamma$ is the classifying space of a discrete group $\Gamma$. Moreover, $\Omega^\spin_n(B\Gamma)$ is the spin bordism group,   $\pos_n(B\Gamma)$ is certain structure group of positive scalar curvature metrics and $R_{n+1}^\spin(B\Gamma)$ is certain obstruction group for the existence of positive scalar curvature metric (see Section $\ref{sec:stolz}$ below for the precise definitions). This is analogous to the surgery exact sequence in topology. In fact, for the surgery exact sequence, Higson and Roe constructed a natural homomorphism  \cite{MR2220522,MR2220523,MR2220524} from the surgery exact sequence to the following exact sequence of $K$-theory of $C^\ast$-algebras:
\[ \to K_{n+1}(B\Gamma) \to K_{n+1}(C_r^\ast(\Gamma)) \to K_{n+1}(D^\ast(\Gamma) ) \to K_n(B\Gamma) \to  \]  
where $C_r^\ast(\Gamma)$ is the reduced $C^\ast$-algebra of $\Gamma$ and \[ K_{n+1}(D^\ast(\Gamma) ) = \varinjlim_{X\subseteq E\Gamma} K_{n+1}( D^\ast(X)^\Gamma)\] where $E\Gamma$ is the universal cover of $B\Gamma$ and $X$ runs through all closed $\Gamma$-invariant subcomplexes of $E\Gamma$ such that $X/\Gamma$ is compact. Hence this exact sequence of Higson and Roe provides natural index theoretic invariants for the surgery exact sequence. Moreover, it is closely related to the Baum-Connes conjecture.

It is a natural task to construct a similar homomorphism from the Stolz' positive scalar curvature exact sequence to the Higson-Roe exact sequence. This was first taken up in a recent paper of Piazza and Schick \cite{Piazza:2012fk}, where they showed that the following diagram commutes when $n+1$ is even:
\[ 
\xymatrix{  \ar[r] &\Omega^\spin_{n+1}(B\Gamma) \ar[r] \ar[d] & R_{n+1}^{\spin}(B\Gamma) \ar[r] \ar[d] & \pos_n(B\Gamma) \ar[r] \ar[d]  &\Omega_n^\spin (B\Gamma) \ar[r]\ar[d]  & \\
\ar[r] &  K_{n+1}(B\Gamma) \ar[r]  &  K_{n+1}(C_r^\ast(\Gamma)) \ar[r]  &  K_{n+1}^\Gamma(D^\ast(\Gamma) ) \ar[r]  & K_n(B\Gamma) \ar[r]  & }
\] 
Here all the vertical maps are naturally defined (cf. Section $\ref{sec:stolz}$ and $\ref{sec:coin}$). 

Of course, one would expect the above diagram to commute for all dimensions, regardless of the parity of $n+1$. In this paper, we show that this is indeed the case. Regarding the method of proof, our approach appears to be more conceptual than that of Piazza and Schick. One of the key ingredients is the use of Kasparov $KK$-theory \cite{GK80} (cf. Section $\ref{sec:kk}$). In particular, our proofs works equally well for both the even and the odd cases.  More precisely, we have the following result.  
\begin{thmb}
Let $X$ be a proper metric space equipped with a proper and cocompact isometric action of a discrete group $\Gamma$. For all $n\in \mathbb N$,  the following diagram commutes
\[ \xymatrix{ \Omega^{\spin}_{n+1}(X)^\Gamma \ar[d]^{\ind_L} \ar[r] & R_{n+1}^{\spin}(X)^\Gamma \ar[r] \ar[d]^{\ind} & \pos_n(X)^\Gamma \ar[r] \ar[d]^{\rho}  &\Omega_{n}^{\spin}(X)^\Gamma  \ar[d]^{\ind_L}  \\
 K_{n+1}(C^\ast_L(X)^\Gamma) \ar[r] & K_{n+1} (C^\ast(X)^\Gamma) \ar[r]^{\partial} & K_{n}(C^\ast_{L, 0}(X)^\Gamma) \ar[r] & K_{n}(C^\ast_L(X)^\Gamma)   }
\]
\end{thmb}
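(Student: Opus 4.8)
The plan is to verify commutativity of the three squares one at a time, reducing each to a statement on geometric generators. A class in $\Omega^{\spin}_*(X)^\Gamma$, $R^{\spin}_*(X)^\Gamma$ or $\pos_*(X)^\Gamma$ is represented by a cycle built from a spin manifold $W$ (possibly with boundary) carrying a proper cocompact isometric $\Gamma$-action, appropriate positive scalar curvature data, and a $\Gamma$-equivariant map $f\colon W\to X$; and $\ind_L$, $\ind$ and $\rho$ are by definition the images under $f_*$ of the classes $\ind_L(D_W)$, $\ind(D_W)$ and $\rho(D_W,\cdot)$ constructed intrinsically on $W$ (Sections~\ref{sec:indexmap} and \ref{sec:stolz}). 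Since an equivariant map $f\colon W\to X$ induces compatible $\ast$-homomorphisms $C^\ast(W)^\Gamma\to C^\ast(X)^\Gamma$, $C_L^\ast(W)^\Gamma\to C_L^\ast(X)^\Gamma$ and $C^\ast_{L,0}(W)^\Gamma\to C^\ast_{L,0}(X)^\Gamma$ intertwining the evaluation maps, the bottom row of the diagram is natural with respect to these $f_*$; in particular the connecting homomorphism $\partial$, the map $\ev_*$, and the inclusion-induced map $K_*(C^\ast_{L,0})\to K_*(C_L^\ast)$ all commute with the corresponding $f_*$. So the first step is to record this functoriality, and then it suffices to prove each square ``over $W$''.

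For the middle square, a generator of $R^{\spin}_{n+1}(X)^\Gamma$ is a cycle $(M,h,f)$ where $M$ is an $(n+1)$-dimensional spin manifold with boundary, with product metric near $\partial M$ and positive scalar curvature metric $h$ on $\partial M$, and $f\colon M\to X$; the top map sends it to $(\partial M,h,f|_{\partial M})\in\pos_n(X)^\Gamma$. Writing $f|_{\partial M}$ as the inclusion $\partial M\hookrightarrow M$ followed by $f$ and using the naturality above, the required identity reduces to $\partial_{n+1}(\ind(D_M))=\rho(D_{\partial M},h)$ in the $K$-theory of the localization algebra of $M$, under the natural identification spelled out in Theorem~\ref{thm:main}. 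This is exactly Theorem A, so the middle square is essentially a reformulation of the main theorem, once one checks that the ``natural identifications'' of Theorem~\ref{thm:main} are the inclusion- and push-forward-induced maps just described.

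For the left square, a generator of $\Omega^{\spin}_{n+1}(X)^\Gamma$ is a closed spin manifold $Y\to X$, sent to the cycle with empty boundary in $R^{\spin}_{n+1}(X)^\Gamma$, whose higher index is the ordinary higher index class $\ind(D_Y)\in K_{n+1}(C^\ast(Y)^\Gamma)$; here I would invoke the defining compatibility $\ev_*(\ind_L(D_Y))=\ind(D_Y)$ of the localized index and conclude by naturality of $\ev_*$. For the right square, a generator of $\pos_n(X)^\Gamma$ is $(Y,g,f)$ with $g$ of positive scalar curvature, mapped to $(Y,f)\in\Omega^{\spin}_n(X)^\Gamma$; here $\ind_L(D_Y)$ is the image of the equivariant $K$-homology class $[D_Y]$ under $K^\Gamma_*(Y)\cong K_*(C_L^\ast(Y)^\Gamma)$ and is therefore independent of the metric, so the two sides are correctly matched. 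Since $D_Y$ is invertible by the Lichnerowicz formula, $\ind(D_Y)=0$ in $K_*(C^\ast(Y)^\Gamma)$, so $\ind_L(D_Y)$ lies in the image of $K_*(C^\ast_{L,0}(Y)^\Gamma)$, and by its very construction the higher rho invariant $\rho(D_Y,g)$ is the distinguished lift of $\ind_L(D_Y)$ along $C^\ast_{L,0}(Y)^\Gamma\hookrightarrow C_L^\ast(Y)^\Gamma$ determined by the invertibility of $D_Y$; together with naturality this gives the right square.

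The genuine analytic input is Theorem A, invoked for the middle square. Granting it, the remaining obstacles for Theorem B are organizational rather than hard: (i) verifying that the three vertical maps are well defined on bordism classes (bordism invariance of the higher index, of the relative index on a spin manifold with positive scalar curvature boundary, and of the higher rho invariant, cf.~Section~\ref{sec:stolz}); and (ii) making the functoriality of the Roe and localization algebras under equivariant maps $W\to X$ precise enough that the ``natural identifications'' in Theorem A line up exactly with the push-forwards appearing in the diagram. I expect (ii) — reconciling the identifications of Theorem~\ref{thm:main} with the maps $f_*$ — to be the most delicate point, though it is routine in spirit. A pleasant feature of this route is that no step depends on the parity of $n+1$: Theorem A and the constructions of $\ind_L$ and $\rho$ are uniform in the dimension, so both the even case treated by Piazza and Schick and the odd case are handled simultaneously.
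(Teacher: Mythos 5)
Your proposal is correct and follows the paper's own route exactly: the left and right squares are checked directly from the definitions of $\ind_L$, $\ind$, and $\rho$ (together with functoriality of the Roe and localization algebras under $f_*$), and the middle square is precisely Theorem~\ref{thm:main} applied to a representative $(M,f,h)\in R^{\spin}_{n+1}(X)^\Gamma$. The paper states these observations more tersely, merely asserting that the first and third squares ``follow immediately from the definition'' and invoking Theorem~\ref{thm:main} for the second, but the strategy is identical.
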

One of the main concepts that we use is the notion of localization algebras. We refer the reader to  Section $\ref{sec:pre}$ and Section $\ref{sec:stolz}$ for the precise definitions of various terms.  We point out that the second row of the above diagram is canonically equivalent to the long exact sequence of Higson and Roe (see Proposition $\ref{prop:coin}$ below for a proof). 

Now if $X = \underline{E}\Gamma,$ the universal space of $\Gamma$-proper actions, then  the map 
\[ K_{n+1}(C^\ast_L(X)^\Gamma) \to K_{n+1} (C^\ast(X)^\Gamma)\]
 in the above commutative diagram can be naturally identified with the Baum-Connes assembly map (cf. \cite{BCH94}). In principle, the higher rho invariant could provide nontrivial secondary invariants that do not lie in the image of the Baum-Connes assembly map.  

The paper is organized as follows. In Section $\ref{sec:pre}$, we recall the definitions of various basic concepts that will be used later in the paper. In Section $\ref{sec:mbpsc}$, we carry out a detailed construction of the higher index class of the Dirac operator on a manifold whose boundary is equipped with a positive scalar curvature metric. In Section $\ref{sec:main}$, we prove the main theorem of the paper. In Section $\ref{sec:stolz}$, we apply our main theorem to map the Stolz' positive scalar curvature exact sequence to a long exact sequence involving the $K$-theory of localization algebras. In Section $\ref{sec:coin}$, we show that the $K$-theoretic long exact sequence used in Section $\ref{sec:stolz}$ is canonically isomorphic to the long exact sequence of Higson and Roe. In particular, the explicit construction shows that the definition of the higher rho invariant in our paper is naturally identical to that of Higson and Roe.

\vspace{.5cm}
\noindent\textbf{Acknowledgements.} We wish to thank Thomas Schick for bringing the main question
of the paper to our attention and for sharing with us a draft of his joint paper with Paolo Piazza \cite{Piazza:2012fk}. We would also like to thank both Herv\'{e} Oyono-Oyono and Thomas Schick for useful comments and suggestions.

\section{Preliminaries}\label{sec:pre}

\subsection{K-theory and index maps}\label{sec:kt}
For a short exact sequence of $C^\ast$-algebras $ 0 \to  \mathcal J \to \mathcal A \to \mathcal A/\mathcal J \to 0$,
we have a six-term exact sequence in $K$-theory:
\[ 
\xymatrix { K_0( \mathcal J ) \ar[r] & K_0(\mathcal A) \ar[r] & K_0(\mathcal A/\mathcal J )  \ar[d]^{\partial_1} \\
K_1(\mathcal A/\mathcal J) \ar[u]^{\partial_0} 
  & K_1(\mathcal A) \ar[l] & K_1(\mathcal J) \ar[l]
}
\]  
Let us recall the definition of the boundary maps $\partial_i$. 

\textbf{Even case.} Let $u$ be an invertible element in $\mathcal A/\mathcal J$. Let $v$ be the inverse of $u$ in $\mathcal A/\mathcal J$. Now suppose $U, V\in \mathcal A$ are lifts of $u$ and $v$. We define 
\[ W = \begin{pmatrix} 1 & U\\ 0 & 1\end{pmatrix} \begin{pmatrix} 1 & 0 \\ -V & 1\end{pmatrix} \begin{pmatrix} 1 & U  \\ 0 & 1 \end{pmatrix}\begin{pmatrix} 0 & -1\\ 1 & 0 \end{pmatrix}. \]
Notice that $W$ is invertible and a direct computation shows that 
\[  W - \begin{pmatrix} U & 0 \\ 0 & V \end{pmatrix} \in  M_2(\mathbb C)\otimes \mathcal J.\]
Consider the idempotent
\begin{equation}\label{eq:keven}
 P = W \begin{pmatrix} 1 & 0 \\ 0 & 0\end{pmatrix} W^{-1} = \begin{pmatrix} UV + UV(1-UV) & (2 + UV)(1-UV) U \\ V(1-UV) & (1-UV)^2\end{pmatrix}.
\end{equation}
We have  
\[ P - \begin{pmatrix} 1 & 0 \\0 & 0\end{pmatrix} \in M_2(\mathbb C)\otimes \mathcal J. \]
By definition, 
\[  \partial_0 ([u]) = [P] - \left[\begin{pmatrix} 1 & 0 \\0 & 0\end{pmatrix} \right] \in K_0(\mathcal J).\]

\begin{remark}
If $u$ is unitary in $\mathcal A/\mathcal J$ and $U\in \mathcal A$ is a lift of $u$, then we can choose $V = U^\ast$. 
\end{remark}

\textbf{Odd case.} Let $q $ be an idempotent in $\mathcal A/\mathcal J$ and $Q$ a lift of $q$ in $\mathcal A$. Then 
\[  \partial_1([q])  = [e^{2\pi iQ}] \in K_1(\mathcal J). \] 
 
\subsection{Roe algebras and localization algebras}\label{sec:roelocal}
In this subsection, we briefly recall some standard definitions in coarse geometry. We refer the reader to \cite{MR1147350, MR1451759} for more details. Let $X$ be a proper metric space. That is, every closed ball in $X$ is compact. An	$X$-module is a separable Hilbert space equipped with a	$\ast$-representation of $C_0(X)$, the algebra of all continuous functions on $X$ which vanish at infinity. An	$X$-module is called nondegenerate if the $\ast$-representation of $C_0(X)$ is nondegenerate. An $X$-module is said to be standard if no nonzero function in $C_0(X)$ acts as a compact operator. 
\begin{definition}
Let $H_X$ be a $X$-module and $T$ a bounded linear operator acting on $H_X$. 
\begin{enumerate}[(i)]
\item The propagation of $T$ is defined to be $\sup\{ d(x, y)\mid (x, y)\in \supp(T)\}$, where $\supp(T)$ is  the complement (in $X\times X$) of the set of points $(x, y)\in X\times X$ for which there exist $f, g\in C_0(X)$ such that $gTf= 0$ and $f(x)\neq 0$, $g(y) \neq 0$;
\item $T$ is said to be locally compact if $fT$ and $Tf$ are compact for all $f\in C_0(X)$; 
\item $T$ is said to be pseudo-local if $[T, f]$ is compact for all $f\in C_0(X)$.  
\end{enumerate}
\end{definition}

\begin{definition}
Let $H_X$ be a standard nondegenerate $X$-module and $\mathcal B(H_X)$ the set of all bounded linear operators on $H_X$.  
\begin{enumerate}[(i)]
\item The Roe algebra of $X$, denoted by $C^\ast(X)$, is the $C^\ast$-algebra generated by all locally compact operators with finite propagations in $\mathcal B(H_X)$.
\item $D^\ast(X)$ is the $C^\ast$-algebra generated by all pseudo-local operators with finite propagations in $\mathcal B(H_X)$. In particular, $D^\ast(X)$ is a subalgebra of the multiplier algebra of $C^\ast(X)$.
\item $C_L^\ast(X)$ (resp. $D_{L}^\ast(X)$) is the $C^\ast$-algebra generated by all bounded and uniformly norm-continuous functions $f: [0, \infty) \to C^\ast(X)$ (resp. $f: [0, \infty) \to D^\ast(X)$)  such that 
\[ \textup{propagation of $f(t) \to 0 $, as $t\to \infty$. }\]  
Again $D_{L}^\ast(X)$ is a subalgebra of the multiplier algebra of $C_L^\ast(X)$. 
\item $C_{L, 0}^\ast(X)$ is the kernel of the evaluation map 
\[  \ev : C_L^\ast(X) \to C^\ast(X),  \quad   \ev (f) = f(0).\]
In particular, $C_{L, 0}^\ast(X)$ is an  ideal of $C_L^\ast(X)$. Similarly, we define $D_{L, 0}^\ast(X)$ as the kernel of the evaluation map from $D_L^\ast(X)$ to $D^\ast(X)$.
\item If $Y$ is a subspace of $X$, then $C_L^\ast(Y; X)$ (resp. $C_{L,0}^\ast(Y;X)$) is defined to be the closed subalgebra of $C_L^\ast(X)$ (resp. $C_{L,0}^\ast(X)$) generated by all elements $f$ such that there exist $c_t>0$ satisfying $ \lim_{t\to \infty} c_t = 0$ and $\supp(f(t)) \subset \{ (x, y) \in X\times X \mid d((x,y), Y\times Y) \leq c_t\}$ for all $t$.  
 
\end{enumerate}
\end{definition}

Now in addition we assume that a discrete group $\Gamma$ acts properly and cocompactly on  $X$  by isometries. In particular, if the action of $\Gamma$ is free, then $X$ is simply a $\Gamma$-covering of the compact space $X/\Gamma$. 

Now let $H_X$ be a $X$-module equipped with a covariant unitary representation of $\Gamma$. If we denote the representation of $C_0(X)$ by $\varphi$ and the representation of $\Gamma$ by $\pi$, this means 
\[  \pi(\gamma) (\varphi(f) v )  =  \varphi(f^\gamma) (\pi(\gamma) v),\] 
where $f\in C_0(X)$, $\gamma\in \Gamma$, $v\in H_X$ and $f^\gamma (x) = f (\gamma^{-1} x)$. In this case, we call $(H_X, \Gamma, \varphi)$ a covariant system.  

\begin{definition}[\cite{MR2732068}]
A covariant system $(H_X, \Gamma, \varphi)$ is called admissible if 
\begin{enumerate}[(1)]
\item the $\Gamma$-action on $X$ is proper and cocompact;
\item $H_X$ is a nondegenerate standard $X$-module;
\item for each $x\in X$, the stabilizer group $\Gamma_x$ acts on $H_X$ regularly in the sense that the action is isomorphic to the action of $\Gamma_x$ on $l^2(\Gamma_x)\otimes H$ for some infinite dimensional Hilbert space $H$. Here $\Gamma_x$ acts on $l^2(\Gamma_x)$ by translations and acts on $H$ trivially. 
\end{enumerate}
\end{definition}
We remark that for each locally compact metric space $X$ with a proper and cocompact isometric action of $\Gamma$, there exists an admissible covariant system $(H_X, \Gamma, \varphi)$. Also, we point out that the condition $(3)$ above is automatically satisfied if $\Gamma$ acts freely on $X$. If no confusion arises, we will denote an admissible covariant system $(H_X, \Gamma, \varphi)$ by $H_X$ and call it an admissible $(X, \Gamma)$-module. 

\begin{remark}\label{rm:ad}
For each $(X, \Gamma)$ above, there always exists an admissible $(X, \Gamma)$-module $\mathcal H$. In particular, $H\oplus \mathcal H$ is an admissible $(X, \Gamma)$-module for every $(X, \Gamma)$-module $H$. 
\end{remark}

\begin{definition}
Let $X$ be a locally compact metric space $X$ with a proper and cocompact isometric action of $\Gamma$. If $H_X$ is an admissible $(X, \Gamma)$-module, we denote by $\mathbb C[X]^\Gamma$ the $\ast$-algebra of all $\Gamma$-invariant locally compact operators with finite propagations in $\mathcal B(H_{X})$.  We define $C^\ast(X)^\Gamma$ to be the completion of $\mathbb C[X]^\Gamma$ in $\mathcal B(H_{ X})$.
\end{definition}
Since the action of $\Gamma$ on $X$ is cocompact, it is known that in this case $C^\ast(X)^\Gamma$ is $\ast$-isomorphic to $C^\ast_r(\Gamma)\otimes \mathcal K$, where $ C^\ast_r(\Gamma)$ is the reduced group $C^\ast$-algebra of $\Gamma$ and $\mathcal K$ is the algebra of all compact operators.

Similarly, we can also define  $D^\ast( X)^\Gamma$, $C_L^\ast( X)^\Gamma$, $D_L^\ast( X)^\Gamma$,  $C_{L,0}^\ast( X)^\Gamma$, $D_{L, 0}^\ast( X)^\Gamma$, $C^\ast_{L}( Y;  X)^\Gamma$ and $C^\ast_{L,0}( Y;  X)^\Gamma$.   

\begin{remark}
Up to isomorphism, $C^\ast(X) = C^\ast(X, H_X)$ does not depend on the choice of the standard nondegenerate $X$-module $H_X$. The same holds for $D^\ast(X)$, $C_L^\ast(X)$, $D_L^\ast( X)$,  $C_{L,0}^\ast( X)$, $D_{L, 0}^\ast(X)$, $C^\ast_{L}( Y;X)$, $C^\ast_{L,0}( Y;X)$ and their $\Gamma$-invariant versions. 
\end{remark}

\begin{remark}
Note that we can also define maximal versions of all the $C^\ast$-algebras above. For example, we define the maximal $\Gamma$-invariant Roe algebra $C^\ast_{\max}(X)^\Gamma$ to be the completion of $\mathbb C[X]^\Gamma$ under the maximal norm:
\[  \|a\|_{\max} = \sup_{\phi} \ \big\{\|\phi(a)\| \mid \phi: \mathbb C[X]^\Gamma \to \mathcal B(H') \ \textup{a $\ast$-representation} \big\}. \]
\end{remark}

\subsection{Index map, local index map, higher rho invariant and Kasparov $KK$-theory}\label{sec:indexmap}
In this subsection, we recall the constructions of the index map, the  local index map (cf. \cite{MR1451759, MR2732068})  and the higher rho invariant.

Let $X$ be a locally compact metric space with a proper and cocompact isometric action of $\Gamma$. We recall the definition of the $K$-homology groups $K_j^\Gamma(X)$, $j=0, 1$. They are generated by certain cycles modulo certain equivalence relations (cf. \cite{GK88}):
\begin{enumerate}[(i)]
\item an even cycle for $K_0^\Gamma(X)$ is a pair $(H_X, F)$, where $H_X$ is an admissible $(X, \Gamma)$-module and $F\in \mathcal B(H_X)$ such that $F$ is $\Gamma$-equivariant,     $F^\ast F - I$ and $F F^\ast - I$ are locally compact and $[F, f] = F f - fF $ is compact for all $f\in C_0(X)$.   
\item an odd cycle for $K_1^\Gamma(X)$ is a pair $(H_X, F)$, where $H_X$ is an admissible $(X, \Gamma)$-module and $F$ is a $\Gamma$-equivariant self-adjoint operator in $\mathcal B(H_X)$ such that $F^2 - I$ is locally compact and $[F, f]$ is compact for all $f\in C_0(X)$. 
\end{enumerate}

\begin{remark}\label{rm:ad2}
In fact, we get the same group even if  we allow cycles $(H_X, F)$ with possibly non-admissible $H_X$ in the above definition. Indeed,  we can take the direct sum $H_X$ with an admissible $(X, \Gamma)$-module $\mathcal H$ and define $F' = F \oplus 1$. It is easy to see that $(H_X\oplus \mathcal H, F')$ is equivalent to $(H_X, F)$ in $K_0^\Gamma(X)$.  So from now on, we assume $H_X$ is an admissible $(X, \Gamma)$-module.  
\end{remark}

\begin{remark}
In the general case where the action of $\Gamma$ on $X$ is not necessarily cocompact, we define 
\[  K_i^\Gamma(X)  = \varinjlim_{Y\subseteq X}  K_i^\Gamma(Y) \]
where $Y$ runs through all closed $\Gamma$-invariant subsets of $X$ such that $Y/\Gamma$ is compact. 
\end{remark}
  
\subsubsection{Index map and local index map}\label{sec:localind}
Now let  $(H_X, F)$ be an even cycle for $K_0^\Gamma(X)$. Let $\{U_i\}$ be a $\Gamma$-invariant locally finite open cover of $X$ with diameter$(U_i) < c$ for some fixed $c > 0$. Let $\{\phi_i\} $ be a $\Gamma$-invariant continuous partition of unity subordinate to $\{U_i\}$. We define
\[  G = \sum_{i} \phi^{1/2}_i F \phi^{1/2}_i,\]
where the sum converges in strong topology. It is not difficult to see that $(H_X, G)$ is equivalent to $(H_X, F)$ in $K^\Gamma_0(X)$. By using the fact that $G$ has finite propagation, we see that $G$ is a multiplier of $C^\ast(X)^\Gamma$ and $G$ is a unitary modulo $C^\ast(X)^\Gamma$. Now by the standard construction in Section $\ref{sec:kt}$ above, $G$ produces a class $[G] \in K_0(C^\ast(X)^\Gamma)$. We define the index of $(H_X, F)$ to be $[G]$.

From now on, we denote this index class of $(H_X, F)$ by $\ind(H_X, F)$ or simply $\ind(F)$ if no confusion arises.

To define the local index of $(H_X, F)$, we need to use a family of partitions of unity. More precisely, for each $n\in \mathbb N$, let  $\{U_{n, j}\}$ be a $\Gamma$-invariant locally finite open cover of $X$ with diameter $(U_{n,j}) < 1/n$ and $\{\phi_{n, j}\}$ be a $\Gamma$-invariant continuous partition of unity subordinate to $\{U_{n, j}\}$. We define 
\begin{equation}\label{eq:path}
 G(t) = \sum_{j} (1 - (t-n)) \phi_{n, j}^{1/2} G \phi_{n,j}^{1/2} + (t-n) \phi_{n+1, j}^{1/2} G \phi_{n+1, j}^{1/2}
\end{equation}
for $t\in [n, n+1]$. 
\begin{remark}
Here by convention, we assume that the open cover $\{U_{0, j}\}$ is the trivial cover $\{ X\}$ when $n = 0$. 
\end{remark}
Then $G(t), 0\leq t <\infty,$ is a multiplier of $C^\ast_L(X)^\Gamma$ and a unitary modulo $C^\ast_L(X)^\Gamma$.  Hence by the  construction in Section $\ref{sec:kt}$ above, $G(t)$ produces a $K$-theory class  $ [G(t)] \in K_0(C^\ast_L(X)^\Gamma)$.  We call this $K$-theory class  the local index class of $(H_X, F)$. If no confusion arises,  we denote this local index class of $(H_X, F)$ by $\ind_L(H_X, F)$ or simply $\ind_L(F)$ from now on.

Now let $(H_X, F)$ be an odd cycle in $K_1^\Gamma(X)$. With the same notation as above, we set $q = \frac{G + 1}{2}$. Then the index class of $(H_X, F)$ is defined to be $ [e^{2\pi i q}]\in K_1(C^\ast(X)^\Gamma).$ For the local index of $(H_X, F)$, one simply uses $q(t) = \frac{G(t) + 1}{2}$ in place of $q$. 

\begin{remark}
For a locally compact metric space $X$ with a proper and cocompact isometric action of $\Gamma$, the local index map  induces a canonical isomorphism $\ind_L: K_i^\Gamma(X)\xrightarrow{\cong}K_i(C_L^\ast(X)^\Gamma)$
 \cite[Theorem 3.2]{MR1451759}. In fact, by the work of Qiao-Roe \cite{MR2661442}, the isomorphism holds true for more general spaces without the cocompact assumption. 
\end{remark}

Now suppose $M$ is an odd dimensional complete spin manifold without boundary. Assume that there is a discrete group $\Gamma$ acting on $M$  properly and cocompactly by isometries. In addition, we assume the action of $\Gamma$ preserves the spin structure on $M$. Let $S$ be the spinor bundle over $M$ and $D= D_{M}$ be the associated Dirac operator on $M$.  Let $H_M = L^2(M, S) $ and  
\[  F = D( D^2 + 1)^{-1/2}.  \]
Then $(H_M, F)$ defines a class in $K_1^\Gamma(M)$. Note that in fact $F$ lies in the multiplier algebra of $C^\ast(M)^\Gamma$, since $F$ can be approximated\footnote{This can be achieved by choosing an appropriate sequence of smooth partitions of unity.} by elements of finite propagation in the multiplier algebra of $C^\ast(M)^\Gamma$. As a result, we can directly work with\footnote{In other words, there is no need to pass to the operator $G$ or $G(t)$ as in the general case.} 
\begin{equation}\label{eq:path2}
 F(t) = \sum_{j} (1 - (t-n)) \phi_{n, j}^{1/2} F \phi_{n,j}^{1/2} + (t-n) \phi_{n+1, j}^{1/2} F \phi_{n+1, j}^{1/2}
\end{equation}
for $t\in [n, n+1]$. And the same argument above defines the index class and the local index class of $(H_M, F)$. We shall denote them by $\ind(D_M)\in K_1(C^\ast(M)^\Gamma)$ and $\ind_L(D_M)\in K_1(C_L^\ast(M)^\Gamma)$ respectively.

The even dimensional case is essentially the same, where one needs to work with the natural $\mathbb Z/2\mathbb Z$-grading on the spinor bundle.  We leave the details to the interested reader (see also Section $\ref{sec:idem}$ below).

\begin{remark}
If we use the maximal versions of the $C^\ast$-algebras, then the same construction defines an index class (resp. a local index class) in the $K$-theory of the maximal version of corresponding $C^\ast$-algebra. 
\end{remark}

\subsubsection{Higher rho invariant}\label{sec:rho}
With $M$ from above,  suppose in addition $M$ is endowed with a complete Riemannian metric $h$ whose scalar curvature $\kappa$ is uniformly positive everywhere, then the associated Dirac operator $D = D_M$ naturally defines a class in $K_1(C_{L, 0}^\ast(M)^\Gamma)$. Indeed, recall that
\[ D^2 = \nabla^\ast \nabla + \frac{\kappa}{4}, \]
where $\nabla: C^\infty(M, S) \to C^\infty(M, T^\ast M\otimes S)$ is the connection and $\nabla^\ast$ is the adjoint of $\nabla$. It follows immediately that $ D$ is invertible in this case. So we can define
\[  F = D|D|^{-1}. \]
Note that $\frac{F+1 }{2}$ is a genuine projection. Define $F(t)$ as in formula $\eqref{eq:path2}$, and denote $q(t) = \frac{F(t) + 1}{2}$. By the construction from Section $\ref{sec:kt}$, we form the path of unitaries $u(t) = e^{2\pi i q(t)}, 0\leq t < \infty$, in $ (C_L^\ast(M)^\Gamma)^+$. Notice that $u(0) = 1$. So the path $u(t), 0\leq t < \infty,$ gives rise to a class in $K_1(C_{L, 0}^\ast(M)^\Gamma)$.

\begin{definition}
The higher rho invariant of $(D, h)$ is defined to be the $K$-theory class
\[ [u(t)]\in K_1(C_{L, 0}^\ast(M)^\Gamma) \]
and will be denoted by $\rho(D, h)$ 
from now on. 
\end{definition}

The higher rho invariant was first introduced by Higson and Roe \cite[Definition 7.1]{MR2761858}. Our formulation is slightly different from that of Higson and Roe. The equivalence of the two definitions is proved in Section $\ref{sec:coin}$.

Again, the even dimensional case is essentially the same, where one needs to work with the natural $\mathbb Z/2\mathbb Z$-grading on the spinor bundle.  We leave the details to the interested reader (see also Section $\ref{sec:idem}$ below). 


\begin{remark}
If we use the maximal version of the $C^\ast$-algebra $C_{L, 0}^\ast(M)^\Gamma$, then the same construction defines a higher rho invariant in the $K$-theory of the maximal version of $C_{L, 0}^\ast(M)^\Gamma$. 
\end{remark}

\subsubsection{Kasparov $KK$-theory}\label{sec:kk}

In this subsection, we recall some standard facts from Kasparov $KK$-theory \cite{GK80}. The discussion will be very brief. We refer the reader to  \cite{BB98} for the details. 

Let $\mathcal A$ and $\mathcal B$ be graded $C^\ast$-algebras. Recall that a Kasparov module for $(\mathcal A, \mathcal B)$ is a triple $(E, \pi, \mathscr F)$, where $E$ is a countably generated graded Hilbert module over $\mathcal B$, $\pi: \mathcal A\to \mathbb B(E)$ is a graded $\ast$-homomorphism and $\mathscr F$ is an odd degree operator in $\mathbb B(E)$ such that $[\mathscr F, \pi(a)], (\mathscr F^2 - 1) \pi(a),$ and $(\mathscr F - \mathscr F^\ast) \pi(a)$ are all in $\mathbb K(E)$ for all $a\in \mathcal A$.  Here $\mathbb B(E)$ is the algebra of all adjointable $\mathcal B$-module homomorphisms on $E$ and $\mathbb K(E)$ is the algebra of all adjointable compact $\mathcal B$-module homorphisms on $E$. Of course, we have $\mathbb K(E) \subseteq \mathbb B(E)$ as an ideal. The Kasparov $KK$-group of $(\mathcal A, \mathcal B)$, denoted $KK^0(\mathcal A, \mathcal B)$, is the group generated by  all homotopy equivalence classes of Kasparov modules of $(\mathcal A, \mathcal B)$ (cf. \cite[Section 17]{BB98}). 

By definition, $KK^1(\mathcal A, \mathcal B) := KK^0(\mathcal A, \mathcal B\widehat \otimes \ccl_1)$, where $\ccl_1$ is the complexified Clifford algebra of $\mathbb R$ and $\widehat\otimes$ stands for graded tensor product. By Bott periodicity, we have a natural isomorphism
\[ \omega: KK^1(\mathcal A, \mathcal B) \xrightarrow{\ \cong\ } KK^0(C_0((0,1)) \widehat\otimes \mathcal A, \mathcal B),\]
where $C_0((0, 1))$  is trivially graded. More precisely, $\omega$ is achieved by taking  the Kasparov product of elements in  $KK^1(\mathcal A, \mathcal B)$ with a generator\footnote{This generator $\gamma$ is fixed once and for all.} $\gamma\in KK^1(C_0((0, 1)), \mathbb C) \cong \mathbb Z$. 
In particular, the natural isomorphism $\omega$ respects the Kasparov product of $KK$-theory. We shall use $ KK^0(C_0((0, 1)) \widehat\otimes \mathcal A, \mathcal B)$ and $ KK^1(\mathcal A, \mathcal B)$ interchangeably, if no confusion arises.

Recall that, in the case where $\mathcal A = \mathbb C$ and $\mathcal B$ is \textit{trivially  graded}, there is a natural isomorphism
\begin{equation}\label{eq:k-kk}
 \vartheta: K_i(\mathcal B) \xrightarrow{\ \cong \ }  KK^i (\mathbb C ,  \mathcal B),
\end{equation}
where $\vartheta$ is defined as follows. For the moment, let $\mathcal B$ be a unital $C^\ast$-algebra. We will comment on how to deal with the nonunital case in a moment. 
\begin{enumerate}[(i)]
\item \textbf{Even case.} Let $p_i$ be projections in $M_{k_i}(\mathcal B) = M_{k_i}(\mathbb C) \otimes \mathcal B$, for $i=1, 2$. We view $\mathcal B^{\oplus k_i} = \bigoplus_{n=1}^{k_i} \mathcal B$ as a trivially graded Hilbert module over $\mathcal B$. Define $\pi_{p_i}$ to be the homomorphism from $\mathbb C$ to $\mathbb B(\mathcal B^{\oplus k_i}) = M_{k_i}(\mathcal B)$ by $\pi_{p_i}(1) = p_i$. It is clear that $(\mathcal B^{\oplus k_i}, \pi_{p_i}, 0)$
is a Kasparov module of $(\mathbb C, \mathcal B)$. Then $\vartheta$ is defined as
\begin{equation}\label{eq:ideven}
 \vartheta ([p_1] - [p_2]) = [(\mathcal B^{\oplus k_1}, \pi_{p_1}, 0)] - [(\mathcal B^{\oplus k_2}, \pi_{p_2}, 0)] \in KK^0(\mathbb C, \mathcal B).  
\end{equation}

\item \textbf{Odd case.} In this case, it is more convenient to use $KK^0(C_0((0, 1)), \mathcal B)$ in place of $KK^1(\mathbb C, \mathcal B)$. Let $u$ be a unitary in $M_k(\mathcal B)$. We again view $\mathcal B^{\oplus k}$ as a trivially graded Hilbert module over $\mathcal B$. 	 Define $\pi_u$ to be the homomorphism from $C_0((0, 1))$ to $\mathbb B(\mathcal B^{\oplus k}) = M_k(\mathcal B)$ given by $\pi_u(f) = u - 1$ where $f(s) = e^{2\pi i s} -1.$ Then $\vartheta$ is defined as 
\begin{equation}\label{eq:idodd}
 \vartheta(u) = [(\mathcal B^{\oplus k}, \pi_u, 0)] \in KK(C_0((0, 1)), \mathcal B). 
\end{equation}
\end{enumerate}

In general $\mathcal B$ is not unital. In this case, we first carry out the construction for $\widetilde{\mathcal B}$ the unitization of $\mathcal B$. Then one can easily verify that the isomorphism $\vartheta : K_i(\widetilde{\mathcal B}) \xrightarrow{\cong} KK^i(\mathbb C, \widetilde{\mathcal B})$ maps the subgroup $K_i(\mathcal B)$ to the subgroup $KK^i(\mathbb C, \mathcal B)$ isomorphically.
\[  
\xymatrix{ K_i(\mathcal B) \ar@{^{(}->}[d] \ar@{-->}[r]^-{\vartheta}  & KK^i(\mathbb C, \mathcal B) \ar@{^{(}->}[d]   \\
K_i(\widetilde{\mathcal B}) \ar[r]^-{\vartheta} & KK^i(\mathbb C, \widetilde{\mathcal B})
}
\]


\begin{remark}
 Note that, even when $\mathcal B$ is trivially graded, the Hilbert module $E$ in a Kasparov module $(E, \pi, \mathscr F)$ may still carry a nontrivial $\mathbb Z/2\mathbb Z$-grading. If $E$ happens to be trivially graded as well, then it implies that $\mathscr F = 0 $, since the only odd degree operator in the trivially graded algebra $\mathbb B(E)$ is $0$. 
\end{remark}

Now suppose $\mathcal B = C^\ast_L(X)^\Gamma$, where $X$ is a locally compact metric space with a proper and cocompact isometric action of $\Gamma$. Let $E_X$ be a Hilbert module over $C^\ast_L(X)^\Gamma$ and $\pi_{\mathbb C}$ be the trivial homomorphism from $\mathbb C$ to $ \mathbb B (E_X)$, i.e., $\pi_{\mathbb C}(1) = 1$.
\begin{enumerate}[(i)]
\item \textbf{Even case.} Let $(H_X, F)$ be  an even cycle in $K_0^\Gamma(X)$ and $G(t)$, $0 \leq t < \infty$, as in formula $\eqref{eq:path}$. Let $E_X = C_L^\ast(X)^\Gamma \oplus C_L^\ast(X)^\Gamma $  with its grading given by the operator   $\lambda = \left(\begin{smallmatrix} 1 & 0 \\ 0 & -1\end{smallmatrix}\right)$. Then $E_X$ is a graded Hilbert module over $C_L^\ast(X)^\Gamma$.
 Define $\mathscr F = \left(\begin{smallmatrix} 0 & G(t)^\ast \\ G(t) & 0 \end{smallmatrix}\right).$ It is easy to check that $(E_X, \pi_{\mathbb C}, \mathscr F)$ defines a Kasparov module in $KK^0 (\mathbb C , C_L^\ast(X)^\Gamma).$ 

\item \textbf{Odd case.} Let $(H_X, F)$ be  an odd cycle in $K_1^\Gamma(X)$ and $G(t)$, $0 \leq t < \infty$, as in formula $\eqref{eq:path}$. Let $E_X = C_L^\ast(X)^\Gamma \widehat \otimes \ccl_1$, viewed as a graded Hilbert module over $C_L^\ast(X)^\Gamma \widehat \otimes \ccl_1$ itself. Define $\mathscr F = G(t) \widehat \otimes e, $ where $e$ is an odd degree element in $\ccl_1$ with $e^2 =1$.  Then it is easy to check that $(E_X, \pi_{\mathbb C}, \mathscr F)$ defines a Kasparov module in $KK^1 (\mathbb C , C_L^\ast(X)^\Gamma).$ 

\end{enumerate}

In fact, the map 
\[  (H_X, F) \mapsto (E_X, \pi_{\mathbb C}, \mathscr F)   \]
induces a natural isomorphism $\nu: K_i^\Gamma(X) \xrightarrow{\ \cong \ } KK^i(\mathbb C, C^\ast_L(X)^\Gamma)$. Recall that the local index map  $\ind_L: K_i^\Gamma(X) \xrightarrow{\ \cong \ } K_i(C^\ast_L(X)^\Gamma)$ is also a natural isomorphism (cf. Section $\ref{sec:localind}$). Let  $\vartheta : K_i ( C^\ast_L(X)^\Gamma)  \xrightarrow{\ \cong \ } KK^i (\mathbb C ,  C^\ast_L(X)^\Gamma)$ be the isomorphism defined in formula $\eqref{eq:k-kk}$. Then the following diagram commutes:
\[ \xymatrix{
  & K_i^\Gamma(X) \ar[ld]_{\ind_L} \ar[rd]^\nu & \\
K_i(C^\ast_L(X)^\Gamma) \ar[rr]^{\vartheta } &   & KK^i(\mathbb C, C^\ast_L(X)^\Gamma)}
\]

\begin{example}\label{ex:dirac}
Suppose $M$ is a complete spin manifold without boundary. Assume that there is a discrete group $\Gamma$ acting on $M$  properly and cocompactly by isometries. In addition, we assume the action of $\Gamma$ preserves the spin structure on $M$. Denote the spinor bundle by $S$ and the associated Dirac operator by $D = D_M$. 
\begin{enumerate}[(1)]
\item  If $M$ is even dimensional, then there is a natural $\mathbb Z/2\mathbb Z$-grading on the spinor bundle $S$. Let us write $S= S^+ \oplus S^-$. With respect to this grading,  $D$ has odd degree. We write $D  = \left(\begin{smallmatrix} 0 & D^- \\ D^+ & 0 \end{smallmatrix}\right) $ and  $F=  \left(\begin{smallmatrix} 0 & F^- \\ F^+ & 0 \end{smallmatrix}\right)$, where $ F = D(D^2 + 1)^{-1/2}$. Note that $F^- = (F^+)^\ast$.
Roughly speaking, we in fact only use half of $F$, i.e. $F^+$,  for the construction of $\ind_L(D_M)$. Now for the construction of the corresponding Kasparov module, it is actually more natural to use the whole $F$. Let us be more specific. Recall that for the construction of the localization algebra $C_L^\ast(M)^\Gamma$, we need to fix an admissible $(M, \Gamma)$-module $H_M$. Of course, up to isomorphism, $C_L^\ast(M)^\Gamma$ does not depend  on this choice. If we choose\footnote{To be precise, we possibly need to take the direct sum of $L^2(M, S^\pm)$ with an admissible $(M, \Gamma)$-module $\mathcal H$ (cf. Remarks $\ref{rm:ad} $ and $\ref{rm:ad2}$). However, this does not affect the discussion that follows.}  $H_M = L^2(M, S^\pm)$, then we denote the resulting localization algebra by $C^\ast_L(M, S^\pm)^\Gamma$. Now let $E_{(M, S)}= C_L^\ast(M, S^+)^\Gamma \oplus C_L^\ast(M, S^-)^\Gamma \cong  C_L^\ast(M)^\Gamma\oplus C_L^\ast(M)^\Gamma$, which inherits a natural $\mathbb Z/2\mathbb Z$-grading from the spinor bundle $S$. Define $\mathscr F := F(t) = \left(\begin{smallmatrix} 0 & F(t)^- \\ F(t)^+ & 0 \end{smallmatrix}\right)$, where $F(t)$ is as in formula $\eqref{eq:path2}$. Then $ (E_{(M, S)}, \pi_{\mathbb C}, \mathscr F)$ is the corresponding Kasparov module associated to $D_M$ in $KK^0(\mathbb C, C_L^\ast(M)^\Gamma)$.
\item If $M$ is odd dimensional, then the spinor bundle $S$ is trivially graded. In this case, we simply use the general construction from before. However, for notational simplicity, we will also write $ (E_{(M, S)}, \pi_{\mathbb C}, \mathscr F)$ for the corresponding Kasparov module associated to $D_M$, even though $S$ is trivially graded. 
\item  Now suppose the metric $h$ on $M$ (of either even or odd dimension) has positive scalar curvature. In this case, we define $F = D|D|^{-1}.$ We see that $F(t)$ satisfies the condition $F(0)^2 = 1$. It follows that the Dirac operator $D_M$ (together with the metric $h$) naturally defines a Kasparove module $(E^0_{(M, S)}, \pi_{\mathbb C}, \mathscr F)$ in $KK^i(\mathbb C, C_{L, 0}^\ast(M)^\Gamma)$, where 
\begin{enumerate}[(a)]
\item in the even case, $E^0_{(M, S)} = C_{L, 0}^\ast(M)^\Gamma \oplus C_{L, 0}^\ast(M)^\Gamma$ as a $\mathbb Z/2\mathbb Z$-graded Hilbert module over $C_{L, 0}^\ast(M)^\Gamma$, with its grading given by $\lambda = \left(\begin{smallmatrix} 1 & 0 \\ 0 & -1\end{smallmatrix}\right) $,
\item in the odd case, $E^0_{(M, S)} = C_{L, 0}^\ast(M)^\Gamma\widehat \otimes \ccl_1$ as a $\mathbb Z/2\mathbb Z$-graded Hilbert module over $C_{L, 0}^\ast(M)^\Gamma\widehat \otimes \ccl_1$ itself,
\end{enumerate} 
and also  $\pi_{\mathbb C}$ and  $\mathscr F$ are defined similarly as before. We emphasis that the condition  $F(0)^2 = 1$ is essential here, since it is needed to verify that 
\[ (\mathscr F^2 - 1)\pi_{\mathbb C}(1) \in \mathbb K(E^0_{(M, S)}) = \begin{cases} M_2(\mathbb C) \otimes C_{L, 0}^\ast(M)^\Gamma & \textup{even case,}  \\   C_{L, 0}^\ast(M)^\Gamma \widehat\otimes \ccl_1 & \textup{odd case.} \end{cases} \] 

\end{enumerate}

\end{example}

We conclude this subsection by the following observation, which will be used  in the proof of our main theorem (Theorem $\ref{thm:main}$). Intuitively speaking, the observation is  that  Kasparov $KK$-theory allows us to represent  $K$-theory classes, which carry ``spatial'' information, by operators, which carry ``spectral'' information. 

Let $M_1$ (resp. $M_2$) be a complete spin manifold without boundary  equipped with a proper and cocompact isometric action of $\Gamma_1$ (resp. $\Gamma_2$). Assume $\Gamma_1$ (resp. $\Gamma_2$) preserves the spin structure of $M_1$ (resp. $M_2$). Let 
\[ \vartheta_1 : K_i(C^\ast_L(M_1)^{\Gamma_1}) \to KK^i (\mathbb C, C_L^\ast(M_1)^{\Gamma_1}) \]
\[ \vartheta_2 : K_i(C^\ast_L(M_2)^{\Gamma_1}) \to KK^i (\mathbb C, C_L^\ast(M_2)^{\Gamma_1}) \]
\[ \vartheta_3 : K_i(C^\ast_L(M_1)^{\Gamma_1}\otimes C_{L}^\ast(M_2)^{\Gamma_2}) \to KK^i(\mathbb C, C_L^\ast(M_1)^{\Gamma_1} \otimes C_{L}^\ast(M_2)^{\Gamma_2} ) \]
be natural isomorphisms defined as in formula $\eqref{eq:k-kk}$. Then it follows from the standard construction of the Kasparov product (cf. \cite[Section 18]{BB98}) that we have the following commutative diagram:
\[ 
 \xymatrix{  K_i(C^\ast_L(M_1)^{\Gamma_1}) \times K_j(C_{L}^\ast( M_2)^{\Gamma_2})  \ar[r]^{\otimes_K} \ar[d]_{\cong}^{\vartheta_1\times \vartheta_2} & K_{i+j}(C^\ast_L(M_1)^{\Gamma_1}\otimes C_{L}^\ast(M_2)^{\Gamma_2}) \ar[d]_{\cong}^{\vartheta_3}  \\
 KK^i (\mathbb C, C_L^\ast(M_1)^{\Gamma_1}) \times KK^j(\mathbb C, C_{L}^\ast(M_2)^{\Gamma_2})) \ar[r]^-{\otimes_{KK}} & KK^{i+j}(\mathbb C, C_L^\ast(M_1)^{\Gamma_1} \otimes C_{L}^\ast(M_2)^{\Gamma_2} ) 
} \]
where $\otimes_K$ is the standard external product in $K$-theory and $\otimes_{KK}$ is the Kasparov product in $KK$-theory. In other words, the commutative diagram states that the Kasparov product is compatible with the external $K$-theory product.

Now let $D_1 = D_{M_1}$, $D_2 = D_{M_2}$ and $D_3 = D_{M_1\times M_2}$ be the associated Dirac operator on $M_1$, $M_2$ and $M_1\times M_2$ respectively.  Let us write 
\[  \beta_1 = \ind_L(D_1) \in K_i(C^\ast_L(M_1)^{\Gamma_1}), \quad  \beta_2 = \ind_L(D_2) \in K_j(C^\ast_L(M_2)^{\Gamma_2})\]
\[  \textup{and} \quad \beta_3 = \ind_L(D_3) \in  K_{i+j}(C^\ast_L(M_1 \times M_2)^{\Gamma_1\times \Gamma_2}). \]
Notice that there is a natural homomorphism 
\[ \Psi: C^\ast_L(M_1)^{\Gamma_1}\otimes C_{L}^\ast(M_2)^{\Gamma_2} \to  C^\ast_L(M_1\times M_2)^{\Gamma_1\times \Gamma_2}.\]
Moreover, $\Psi$ induces an isomorphism on $K$-theory
\[ \Psi_\ast: K_{i+j}(C^\ast_L(M_1)^{\Gamma_1}\otimes C_{L}^\ast(M_2)^{\Gamma_2}) \xrightarrow{\cong}  K_{i+j}(C^\ast_L(M_1\times M_2)^{\Gamma_1\times \Gamma_2}).\] 
Then we have the following claim.
\begin{claim}\label{claim:local}
$ \Psi_\ast(\beta_1 \otimes_K \beta_2) = \beta_3  $ in $K_{i+j}(C^\ast_L(M_1 \times M_2)^{\Gamma_1\times \Gamma_2})$.
\end{claim}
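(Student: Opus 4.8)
The claim asserts that the local index of the Dirac operator on a product manifold is the external product of the local indices on the factors. The strategy is to reduce this to the analogous (and well-known) statement about the Kasparov-theoretic classes, using the commutative diagram relating $\otimes_K$ and $\otimes_{KK}$ that was just established. Concretely, under the isomorphisms $\vartheta_1,\vartheta_2,\vartheta_3$, the element $\beta_i = \ind_L(D_i)$ corresponds to the $KK$-class $\nu(\beta_i)$ represented by the Kasparov module $(E_{(M_i,S)},\pi_{\mathbb C},F_i(t))$ of Example~\ref{ex:dirac} (this is exactly the commuting triangle relating $\ind_L$, $\nu$ and $\vartheta$ recalled just before that example). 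So it suffices to show
\[ \nu(\beta_1) \otimes_{KK} \nu(\beta_2) = \nu(\beta_3) \quad\text{in } KK^{i+j}(\mathbb C, C_L^\ast(M_1\times M_2)^{\Gamma_1\times\Gamma_2}), \]
and then transport back via $\vartheta_3^{-1}$ using the commutative square.

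**Key steps.** First I would identify, at the level of operators and Hilbert modules, the Kasparov module representing the external product $\nu(\beta_1)\otimes_{KK}\nu(\beta_2)$. Since $\pi_{\mathbb C}$ is the trivial homomorphism from $\mathbb C$, the Kasparov product over $\mathbb C$ reduces to the graded tensor product of the underlying modules and a sum-type formula for the operator: on $E_{(M_1,S_1)}\widehat\otimes E_{(M_2,S_2)}$ the product operator is (a compact perturbation of) $F_1(t)\widehat\otimes 1 + 1\widehat\otimes F_2(t)$, up to the usual choice of a connection and the grading conventions in Kasparov's technical theorem. Second, I would recall the classical fact that the Dirac operator on a Riemannian product $M_1\times M_2$ satisfies $D_{M_1\times M_2} = D_{M_1}\widehat\otimes 1 + 1\widehat\otimes D_{M_2}$ under the identification $S_{M_1\times M_2}\cong S_{M_1}\widehat\otimes S_{M_2}$; consequently the bounded transform $F_3 = D_3(D_3^2+1)^{-1/2}$ agrees, modulo lower-order (locally compact, propagation-controlled) terms, with $F_1\widehat\otimes 1 + 1\widehat\otimes F_2$, and the same holds after applying the averaging-over-partitions construction $\eqref{eq:path2}$ to get $F_3(t)$. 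Third, I would check that the two Kasparov modules — the product $\nu(\beta_1)\otimes_{KK}\nu(\beta_2)$ and the module $\nu(\beta_3)$ built directly from $D_3$ — are operator-homotopic (one interpolates linearly between the two operators, noting the difference is a controlled compact family, so the straight-line path stays inside the Kasparov cycles). Finally, feeding this into the commutative square relating $\otimes_K$ and $\otimes_{KK}$ gives $\beta_1\otimes_K\beta_2 = \vartheta_3^{-1}(\nu(\beta_3)) = \beta_3$.

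**Main obstacle.** The delicate point is step three: verifying that the Kasparov product $\nu(\beta_1)\otimes_{KK}\nu(\beta_2)$, which a priori is only well-defined up to homotopy and depends on an auxiliary choice (a connection, or the $F$-connection in Kasparov's formalism), is represented by an operator close enough to $F_1(t)\widehat\otimes 1 + 1\widehat\otimes F_2(t)$ that it can be joined to $\nu(\beta_3)$ within the cycles. One must keep track of two things simultaneously: the grading/Clifford bookkeeping (the even-even, even-odd, odd-odd cases of $(i,j)$ differ in how $\ccl_1$ factors appear, and the formula $F_1\widehat\otimes\gamma + \cdots$ must be set up so that the product operator is genuinely odd and squares to $1$ modulo compacts), and the propagation estimates needed so that the homotopy lies in the localization algebra $C_L^\ast(M_1\times M_2)^{\Gamma_1\times\Gamma_2}$ rather than merely in $C^\ast$ — i.e. the perturbation terms must have propagation tending to $0$ as $t\to\infty$. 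Since $D_3^2 = D_1^2\widehat\otimes 1 + 1\widehat\otimes D_2^2$ and functional calculus of such a sum is controlled by that of the summands, these estimates are routine but must be done with care; I expect this to be the only place where real work is needed, the rest being formal consequences of the diagrams already in the excerpt.
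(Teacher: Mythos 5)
Your proposal follows essentially the same route as the paper: reduce the claim to a $KK$-theoretic statement via the isomorphisms $\vartheta_k$ and the commuting square relating $\otimes_K$ with $\otimes_{KK}$, then represent each local index class by the Kasparov module of Example~\ref{ex:dirac} and appeal to the fact that the Kasparov product of the two Dirac cycles equals the Dirac cycle on the product manifold (the paper handles this last point by citing Baaj--Julg \cite{MR715325} for the unbounded picture).

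One technical caution about your step three as written: the bounded operator $F_1(t)\widehat\otimes 1 + 1\widehat\otimes F_2(t)$ is \emph{not} a compact perturbation of the Kasparov product operator, and in fact is not itself a Kasparov cycle. Since the two summands anticommute, its square is $F_1^2\widehat\otimes 1 + 1\widehat\otimes F_2^2$, so
\[
\bigl(F_1\widehat\otimes 1 + 1\widehat\otimes F_2\bigr)^2 - 1 = (F_1^2-1)\widehat\otimes 1 + 1\widehat\otimes F_2^2,
\]
and the second term is not compact. Thus a straight-line homotopy from this operator to anything will not stay inside the Kasparov cycles. The additive formula is exact only at the \emph{unbounded} level, $D_3 = D_1\widehat\otimes 1 + 1\widehat\otimes D_2$; at the bounded level one must use the Connes--Skandalis/Kucerovsky form of the product (e.g.\ $F_1\widehat\otimes 1 + (\sqrt{1-F_1^2}\,\widehat\otimes 1)(1\widehat\otimes F_2)$) or, more cleanly, run the entire comparison in the unbounded picture via Baaj--Julg, which is precisely what the paper's citation is for. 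Your diagnosis of the two real issues (Clifford/grading bookkeeping and propagation control in $C_L^\ast$) is correct; once the bounded formula is replaced by the actual Kasparov product or the unbounded version, those estimates go through and the argument coincides with the paper's.
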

\begin{proof}
 Denote $\alpha_k = \vartheta_k (\beta_k)$, for $k\in \{1, 2, 3\}$ (cf. formulas $\eqref{eq:ideven}$ and $\eqref{eq:idodd}$). On one hand,  by the commutative diagram above, we have
\[  \alpha_1 \otimes_{KK} \alpha_2 = \vartheta_3(\beta_1\otimes_K \beta_2). \] 
On the other hand, within the same $KK$-theory class, the classes $\alpha_1$,  $\alpha_2$ and $\alpha_3$ can be represented respectively by $\alpha'_1 = (E_{(M_1, S)}, \pi_{\mathbb C}, \mathscr F_1)$,   $\alpha'_2 = (E_{(M_2, S)}, \pi_{\mathbb C}, \mathscr F_2)$ and $\alpha'_3 = (E_{(M_1\times M_2, S)}, \pi_{\mathbb C}, \mathscr F_3)$  as in Example $\ref{ex:dirac}$, where $\mathscr F_k$ is constructed out of the Dirac operator $D_k$. Intuitively speaking, the key idea is that, in the Kasparov $KK$-theory framework, these $K$-theory classes can be represented by the Dirac operators at the operator level. 

 Now one standard way to define the Kasparov product of $\alpha'_1$ and $\alpha'_2$ is to use connections (cf. \cite[Chapter 18]{BB98}). The notion of connection was due to Connes and Skandalis \cite{MR775126}. The Kasparov product of $\alpha'_1$ and $\alpha'_2$ is defined to be any Kasparov module\footnote{Different candidates of Kasparov module will give the same $KK$-theory class, as long as all the necessary conditions are satisfied.} which satisfies several standard conditions (cf. \cite[Definition 18.3.1 \& Definition 18.4.1]{BB98}). In particular, once we have a candidate for the Kasparov product, then it only remains to see whether these standard conditions are satisfied by this candidate. In our case, it is not difficult to verify that the candidate $\alpha'_3$ indeed satisfies all these standard conditions. Therefore, we have 
\[ \alpha'_1\otimes_{KK} \alpha'_2 = \alpha'_3. \]
An alternative way is to use unbounded Kasparov modules  \cite{MR715325}. In any case,  we have
\[   \vartheta_3(\beta_1\otimes_K \beta_2)  = \alpha_1 \otimes_{KK} \alpha_2 = \alpha'_1 \otimes_{KK} \alpha'_2 = \alpha'_3 =  \alpha_3 = \vartheta_3(\beta_3). \]
So $ \beta_1 \otimes_K \beta_2 = \beta_3$.
This proves the claim.
\end{proof}
Now if one of the manifolds, say $M_2$, is equipped with a metric $h$ of positive scalar curvature, then we can replace $C_L^\ast(M_2)^{\Gamma_1}$  by $C_{L,0}^\ast(M_2)^{\Gamma_1}$. More precisely, let us denote
\[ \xi_1 = \ind_L(D_1)\in K_i(C_{L}^\ast(M_1)^{\Gamma_1}),\quad \xi_2 = \rho(D_2, h)  \in K_j(C_{L, 0}^\ast(M_2)^{\Gamma_2}) \]
\[\textup{and} \quad  \xi_3 = \rho(D_3, h)\in K_{i+j}(C_{L,0}^\ast(M_1\times M_2)^{\Gamma_1\times \Gamma_2}).\]
By construction, we have $\xi_1\otimes_K \xi_2 \in K_{i+j}(C_{L}^\ast(M_1)^{\Gamma_1}\otimes C_{L, 0}^\ast(M_2)^{\Gamma_2}).$ Let $\iota$ be the natural homomorphism
\[ \iota: C_{L}^\ast(M_1)^{\Gamma_1}\otimes C_{L, 0}^\ast(M_2)^{\Gamma_2} \to C_{L,0}^\ast(M_1\times M_2)^{\Gamma_1\times \Gamma_2}. \]
Then the following claim holds. 
\begin{claim}\label{claim:rho}
 $\iota_\ast(\xi_1\otimes_K \xi_2) = \xi_3$ in $K_{i+j}(C_{L,0}^\ast(M_1\times M_2)^{\Gamma_1\times \Gamma_2})$.
\end{claim}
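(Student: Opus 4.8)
The plan is to mimic the proof of Claim \ref{claim:local} verbatim, with the only change being that the relevant Kasparov modules now live over the ideal $C_{L,0}^\ast(\,\cdot\,)^\Gamma$ rather than over $C_L^\ast(\,\cdot\,)^\Gamma$, which is exactly what the positive scalar curvature hypothesis on $M_2$ buys us (via part (3) of Example \ref{ex:dirac}). First I would set $\eta_k = \vartheta_k(\xi_k)$ for $k=1,2,3$, where $\vartheta_1$ is the isomorphism $K_i(C_L^\ast(M_1)^{\Gamma_1}) \xrightarrow{\cong} KK^i(\mathbb C, C_L^\ast(M_1)^{\Gamma_1})$, $\vartheta_2$ is the isomorphism $K_j(C_{L,0}^\ast(M_2)^{\Gamma_2}) \xrightarrow{\cong} KK^j(\mathbb C, C_{L,0}^\ast(M_2)^{\Gamma_2})$, and $\vartheta_3$ is the isomorphism attached to $C_{L,0}^\ast(M_1\times M_2)^{\Gamma_1\times\Gamma_2}$, all as in formula \eqref{eq:k-kk}. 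Then I would invoke the same compatibility of the Kasparov product with the external $K$-theory product — the commutative diagram displayed just before Claim \ref{claim:local} — but now applied to the pair of algebras $C_L^\ast(M_1)^{\Gamma_1}$ and $C_{L,0}^\ast(M_2)^{\Gamma_2}$, together with the identification $C_L^\ast(M_1)^{\Gamma_1} \otimes C_{L,0}^\ast(M_2)^{\Gamma_2} \subseteq C_{L,0}^\ast(M_1\times M_2)^{\Gamma_1\times\Gamma_2}$ (this is where $\iota$ enters). This gives $\eta_1 \otimes_{KK} \eta_2 = \iota_\ast\big(\vartheta_3(\xi_1 \otimes_K \xi_2)\big)$ (with a mild abuse, writing $\vartheta_3$ both before and after pushing forward along $\iota$).

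Next I would represent the classes $\eta_1,\eta_2,\eta_3$ at the operator level. By part (1)/(2) of Example \ref{ex:dirac}, $\eta_1$ is represented by $(E_{(M_1,S)},\pi_{\mathbb C},\mathscr F_1)$ with $\mathscr F_1$ built from $D_1$; and crucially, by part (3) of Example \ref{ex:dirac}, because $M_2$ carries the positive scalar curvature metric $h$ the class $\eta_2 = \vartheta_2(\rho(D_2,h))$ is represented by $(E^0_{(M_2,S)},\pi_{\mathbb C},\mathscr F_2)$ with $\mathscr F_2 = D_2(t)|D_2(t)|^{-1}$-type operator satisfying $\mathscr F_2(0)^2 = 1$, living over $C_{L,0}^\ast(M_2)^{\Gamma_2}$. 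Likewise $\eta_3 = \vartheta_3(\rho(D_3,h))$ is represented by $(E^0_{(M_1\times M_2,S)},\pi_{\mathbb C},\mathscr F_3)$ over $C_{L,0}^\ast(M_1\times M_2)^{\Gamma_1\times\Gamma_2}$. Then, exactly as in Claim \ref{claim:local}, the standard construction of the Kasparov product applied to these representatives (or, cleanly, the unbounded-module computation using $D_3 = D_1\widehat\otimes 1 + 1\widehat\otimes D_2$ as in \cite{MR715325}) yields $\mathscr F_1 \widehat\otimes 1 + 1 \widehat\otimes \mathscr F_2 \rightsquigarrow \mathscr F_3$, i.e. the product of these representatives is the representative attached to $D_3$. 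Chaining the equalities, $\vartheta_3(\iota_\ast(\xi_1\otimes_K\xi_2)) = \iota_\ast(\eta_1\otimes_{KK}\eta_2) = \iota_\ast(\eta_1'\otimes_{KK}\eta_2') = \eta_3' = \eta_3 = \vartheta_3(\xi_3)$, and since $\vartheta_3$ is an isomorphism we conclude $\iota_\ast(\xi_1\otimes_K\xi_2) = \xi_3$.

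The main obstacle, and the one place where this differs from Claim \ref{claim:local} in substance rather than notation, is verifying that the Kasparov product of the two ``invertible'' representatives lands in the ideal-level module $E^0_{(M_1\times M_2,S)}$ over $C_{L,0}^\ast$ with the correct compactness property $(\mathscr F_3^2 - 1)\pi_{\mathbb C}(1) \in \mathbb K(E^0_{(M_1\times M_2,S)})$ — that is, checking that the product operator still satisfies $\mathscr F_3(0)^2 = 1$ and that the evaluation-at-$0$ map is respected by the product construction. Concretely one must see that the ideal $C_{L,0}^\ast(M_1)^{\Gamma_1}\otimes C_{L,0}^\ast(M_2)^{\Gamma_2}$ (and its image under $\iota$) absorbs the product correctly; for the unbounded picture this reduces to the algebraic identity $(D_1\widehat\otimes 1 + 1\widehat\otimes D_2)^2 = D_1^2\widehat\otimes 1 + 1\widehat\otimes D_2^2$ together with the Lichnerowicz formula $D_i^2 = \nabla_i^\ast\nabla_i + \kappa_i/4$, which on $M_2$ gives invertibility at every $t$ including $t=0$, hence $\mathscr F_3(0)^2 = 1$. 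Everything else — associativity/compatibility of the products, naturality of $\vartheta$, the passage between bounded and unbounded Kasparov modules — is routine and already used in the proof of Claim \ref{claim:local}, so I would not belabor it.
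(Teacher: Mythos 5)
Your proof is correct and takes essentially the same approach as the paper, which simply remarks that Claim \ref{claim:rho} is proved exactly as Claim \ref{claim:local}, with the positive scalar curvature hypothesis on $M_2$ (via part (3) of Example \ref{ex:dirac}) ensuring the relevant Kasparov modules live over the ideal $C_{L,0}^\ast(\cdot)$. One small slip: in your last paragraph the algebra absorbing the product should be $C_L^\ast(M_1)^{\Gamma_1}\otimes C_{L,0}^\ast(M_2)^{\Gamma_2}$ rather than $C_{L,0}^\ast(M_1)^{\Gamma_1}\otimes C_{L,0}^\ast(M_2)^{\Gamma_2}$, since $M_1$ carries no positive scalar curvature hypothesis.
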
 
\begin{proof}
The proof works exactly the same as that of Claim $\ref{claim:local}$. The key idea again is that, in the Kasparov $KK$-theory framework, these $K$-theory classes can be represented by the Dirac operators at the operator level. For each $\xi_i$,  let $\xi_i'$ be the corresponding Kasparov module defined as in Example $\ref{ex:dirac}$. It suffices to verify that $\xi'_3$ is the Kasparov product of $\xi'_1$ and $\xi'_2$. Again $\xi'_3$ is a natural candidate for the Kasparov product of $\xi'_1$ and $\xi'_2$. Once we have a candidate, all it remains is to verify that the standard conditions as those in \cite[Definition 18.3.1 \& Definition 18.4.1]{BB98} are satisfied by $\xi'_3$. Indeed, it is not difficult to verify these conditions for $\xi'_3$ in our case. Therefore, $\xi'_3$ is the Kasparov product of $\xi'_1$ and $\xi'_2$. This finishes the proof. 
\end{proof}
 The case where both $M_1$ and $M_2$ are equipped with metrics of positive scalar curvature is also similar. We omit the details.

\section{Manifolds with positive scalar curvature on the boundary}\label{sec:mbpsc}
In this section, we discuss the higher index classes of  Dirac operators on spin manifolds with boundary, where the boundary is endowed with a positive scalar curvature metric. 

Throughout the section, let $M$ be a complete spin manifold with boundary $\partial M$ such that
\begin{enumerate}[(i)]
\item the metric on $M$ has product structure near $\partial M$ and its restriction to $\partial M$ has positive scalar curvature;
\item there is an proper and cocompact isometric action of a discrete group $\Gamma$ on $M$;
\item the action of $\Gamma$ preserves the spin structure of $M$. 
\end{enumerate}  

We attach an infinite cylinder $\mathbb R_{\geq 0} \times \partial M = [0, \infty)\times \partial M$ to $M$. If we denote the Riemannian metric on $\partial M$ by $h$, then we endow $\mathbb R_{\geq 0}\times \partial M$ with the standard product metric $dr^2 + h$. Notice that  all relevant geometric structures on $M$ extend naturally  to $M_\infty = M\cup_{\partial M}( \mathbb R_{\geq 0}\times \partial M)$. The action of $\Gamma$ on $M$ also extends to  $M_\infty$. 

Let us set $M_{(n)} = M\cup_{\partial M} ([0, n]\times \partial M)$, for $n\geq 0$. In particular, $M_{(0)} = M$. Again,  the action of $\Gamma$ on $M$ extends naturally to  $M_{(n)}$, where $\Gamma$ acts on $[0, n]$ trivially.      

\begin{remark}
In fact, without loss of generality, we assume that we have fixed an identification of a neighborhood of  $\partial M$ in $M$ with $[-3, 0]\times \partial M$.  For $ 0 \leq k \leq 3$, we will write 
\[  M_{(-k)} = M\backslash ((-k, 0]\times \partial M). \]
This will be used later for notational simplification. See Figure $\ref{fig:coord}$ below.
\end{remark}

\begin{figure}[h]

\hspace*{3.5cm}
\begin{tikzpicture}
\draw    (8.5, 1.5) -- (4, 1.5);
\draw (4, 1.5).. controls (3, 1.5) and (3, 1) .. (2, 1);
\draw (2, 1).. controls  (1, 1) and (0.5, 1.6) .. (0.5, 2);
\draw (0.5, 2).. controls (0.5, 2.7) and (1, 3.15).. (1.75, 3.15);
\draw (1.75, 3.15).. controls (2.5, 3.15) and (2.5, 2.5) .. (3.5, 2.5);
\draw (1.08, 2) .. controls (1.65, 2.5) and  (2, 2.5) .. (2.6, 1.9);
\draw (3.5, 2.5) -- (8.5, 2.5);
\draw (1.35, 2.2) .. controls (1.78, 1.9) .. (2.35, 2.07);
\draw (4, 1.4) -- (4, 1.6);
\draw [dashed](4, 1.5) .. controls (4.4, 1.5) and (4.4, 2.5) .. (4, 2.5);
\draw (4, 1.5) .. controls (3.6, 1.5) and  (3.6, 2.5) .. (4, 2.5);
\draw (6, 1.4) -- (6, 1.6);
\draw [dashed](6, 1.5) .. controls (6.4, 1.5) and  (6.4, 2.5) ..  (6, 2.5);
\draw (6, 1.5) .. controls (5.6, 1.5) and  (5.6, 2.5) .. (6, 2.5);
\node [below] at (4, 1.3){$-3$};
\node  [below] at (6, 1.3){$0$};
\node [above] at (6,  2.6){$\partial M$};
\node [above] at (4.5, 3){$M$};
\node [right] at (7, 1){$\mathbb R_{\geq 0}\times \partial M$};
\end{tikzpicture}

\caption{Attaching $\mathbb R_{\geq 0}\times \partial M$ to $M$. }
\label{fig:coord}
\end{figure}
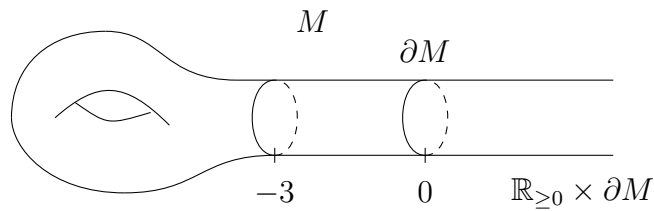

Now let $D = D_{M_\infty}$ be the associated Dirac operator on $M_\infty= M\cup_{\partial M}( \mathbb R_{\geq 0}\times \partial M)$. 
\begin{claim}\label{claim:fin}
With the above notation,  $D_{M_\infty}$ in fact defines an index class, denoted by  $\ind(D_{M})$, in $ K_i(C^\ast(M)^\Gamma)$.
\end{claim}
In this section, we carry out a detailed construction to verify this claim. We point out that the claim, at least in the case when there is no $\Gamma$-action, was due to Roe \cite[Proposition 3.11]{MR1399087}. A detailed proof was later given by Roe in \cite{Roe:2012kq}. Our construction is different from that of Roe. Most importantly, we construct a representative of the index class $\ind(D_{M})$ for each $t\in [0, \infty)$, with specific control of the propagation away from the boundary of $M$. This will be one of the key ingredients used in the proof of our main result, Theorem $\ref{thm:main}$, below.

\begin{remark}
The action of $\Gamma$ on $M_\infty= M\cup_{\partial M}( \mathbb R_{\geq 0}\times \partial M)$ is not cocompact, that is, $M_\infty/\Gamma$ is not compact. 
\end{remark}
\begin{remark}
The above claim is not true in general, if we drop the positive scalar curvature assumption near the boundary,.
\end{remark}
Let $\kappa$ be the scalar curvature function on $M_\infty = M\cup_{\partial M}( \mathbb R_{\geq 0}\times \partial M)$ and $\rho$ be  a $\Gamma$-invariant nonnegative smooth function on $M_\infty$ satisfying:  
\begin{enumerate}[(a)]
\item $\supp(\rho)\subseteq M_{(n)} = M\cup_{\partial M}([0, n]\times \partial M)$ for some $n\in \mathbb N$;
\item there exists $c > 0$ such that $ \rho(y)+\frac{\kappa(y)}{4} > c $ for all $y\in M_\infty$. 
\end{enumerate}
Define
\[  F = F_\rho =  \frac{D}{\sqrt {D^2 + \rho} }. \]
Recall that \[ \frac{1}{\sqrt x} = \frac{2}{\pi}\int_0^\infty \frac{1}{x + \lambda^2}  d\lambda. \]
It follows that  
\[  {\frac{1}{\sqrt{D^2 + \rho}} }= \frac{2}{\pi}\int_0^\infty (D^2 + \rho + \lambda^2)^{-1} d\lambda. \]
Now by using the equality
\begin{align*}
 &(D^2 + \rho+ \lambda^2)^{-1} D - D (D^2 + \rho + \lambda^2)^{-1} = (D^2 + \rho + \lambda^2)^{-1}  [D, \rho ] (D^2 + \rho + \lambda^2)^{-1},
\end{align*}
we have
\begin{align*}
F^2 & = D(D^2 + \rho)^{-1/2}D(D^2 + \rho)^{-1/2} \\
& = D \left(\frac{2}{\pi}\int_0^\infty (D^2 + \rho + \lambda^2)^{-1} d\lambda \right) D (D^2 + \rho)^{-1/2}\\
& = \left( D^2 \cdot \frac{2}{\pi} \int_0^\infty (D^2 + \rho+ \lambda^2)^{-1} d\lambda +  D \cdot \frac{2}{\pi} \int_0^\infty R(\lambda) d\lambda \right) (D^2 + \rho)^{-1/2} \\
& = D^2 (D^2 + \rho)^{-1} + D R (D^2 + \rho)^{-1/2}	\\
& = 1 - \rho (D^2 + \rho)^{-1} + D R (D^2 + \rho)^{-1/2}
\end{align*} 
where $R = \frac{2}{\pi}\int_0^\infty R(\lambda) d\lambda$ with 
\[ R(\lambda) =(D^2 + \rho + \lambda^2)^{-1}  [D, \rho ] (D^2 + \rho + \lambda^2)^{-1}.\]
Since  $\supp(\rho)$ and $\supp([D, \rho])$ are $\Gamma$-cocompact, it follows that both   $\rho (D^2 + \rho)^{-1} $ and $D R (D^2 + \rho)^{-1/2}$ are in $C^\ast(M_\infty)^\Gamma$.

In fact, we can choose $\rho$ such that the following are satisfied. 
\begin{enumerate}[(i)]
\item $\rho \equiv C $ on $M_{(n_1)} = M\cup_{\partial M}([0,n_1]\times \partial M)$ and $\rho \equiv 0 $ on $[n_2, \infty)\times \partial M$ for some $ n_2 > n_1 \geq 0$ and some constant $C>0$.
\item $\rho(x, y) = \rho(x, y')$ for all $y, y'\in \partial M$, where $(x, y) \in (n_1, n_2)\times \partial M$. In other words, $\rho$ is constant along $\partial M$. In particular, it follows that  
\[  [D , \rho] = \rho'\cdot c(\partial_x) \]
where $c(\partial_x)$ is the Clifford multiplication of $\partial_x$.
\item 
 $ \|D R(D^2 + \rho)^{-1/2}\| < \frac{1}{100\cdot \omega_0}$, by choosing $\rho$ so that $\rho'$ has small supremum-norm.  Here $\omega_0 =  2\pi (\|F\|+1)^2 \cdot e^{2\pi (\|F\|+1)} $. The reason for choosing such a constant $\omega_0$ will become clear in Definition $\ref{def:inv}$.
\end{enumerate}

For any number $c_0 >0 $ such that $\supp(\rho) \subset M_{(c_0)}$, let us decompose the space $L^2(M_\infty, S)$ into a direct sum $L^2(M_{(c_0)}, S) \oplus L^2(\mathbb R_{\geq c_0}\times \partial M, S)$.  
We write 
\[ F^2= \begin{pmatrix} A_{11} & A_{12} \\ A_{21}  & A_{22} \end{pmatrix}\]
with respect to this decomposition. Notice that $A_{12} = A_{21}^\ast$, since $F^2$ is selfadjoint. Now by our assumption $\|D R(D^2 + \rho)^{-1/2}\| < \frac{1}{100\cdot \omega_0}$ and the formula 
\[ F^2 =  1 - \rho (D^2 + \rho)^{-1} + D R (D^2 + \rho)^{-1/2}, \]
 it is not difficult to verify that  
\begin{equation}\label{eq:estf}
 \| A_{12}\| = \|A_{21}\|< \frac{1}{100\cdot \omega_0}\quad   \textup{and} \quad  \|A_{22} - 1\|<\frac{1}{100\cdot \omega_0}. 
\end{equation} 
We point out that  the above estimates hold independent of the choice of $c_0$, as long as we have $\supp(\rho) \subset M_{(c_0)}$.

Similarly, suppose $\psi$ is a function with supported on $(0, \infty) \times \partial M$ such that $\psi$ is constant along $\partial M$. Notice that 
\begin{equation}\label{eq:comest}
[F, \psi] =  \frac{2}{\pi}\int_0^\infty (D^2 + \rho + \lambda^2)^{-1} [D^2, \psi]  (D^2 + \rho + \lambda^2)^{-1} d\lambda
\end{equation}
and $ [D^2, \psi] = -\psi''  - 2 \psi' \partial_x$, where $\psi' = \partial_x \psi$ and $\psi''= \partial_x^2\psi$.
In particular, we see that $\| [F, \varphi]\|$ is proportional to the supremum-norm of  $\varphi'$ and $\varphi''$.

Notice that although $F$ itself generally does not have finite propagation, we can choose a $\Gamma$-invariant locally finite open cover $\{U_j\}$ of $M_\infty$ and a $\Gamma$-invariant partition of unity $\{\varphi_{j}\}$ subordinate to $\{U_{j}\}$ such that  
\begin{equation*}
 G = \sum_{j}\varphi_{j}^{1/2} F \varphi_{j}^{1/2} 
\end{equation*}
has finite propagation. Moreover, by use of Equation $\eqref{eq:comest}$, for $\forall \varepsilon >0 $, we can choose the  open cover and the partition of unity appropriately so that  $\|F - G\| < \varepsilon$. \emph{Hence without loss of generality, let us assume that $F$ has finite propagation.}

\subsection{Invertibles}\label{sec:invertible}
Now suppose the dimension of $M_\infty = M\cup_{\partial M}( \mathbb R_{\geq 0}\times \partial M)$ is odd. In this subsection,  for each $t\in [0, \infty)$, we will construct $F(t)$ from $F$ with specific prescribed propagation properties.

First for  $\forall n\in \mathbb N$, we can choose a $\Gamma$-invariant locally finite open cover $\{U_{n,j}\}$ of $M_\infty$ and a $\Gamma$-invariant partition of unity $\{\varphi_{n,j}\}$ subordinate to $\{U_{n,j}\}$ with the following properties.  
\begin{enumerate}[(a)]
\item If $U_{n, j} \subset M_{(0)} = M $, then $ \diam(U_{n, j}) < \frac{1}{n}$.
\item There are precisely two open sets $U_{n,j}$ such that $U_{n, j}\cap (\mathbb R_{\geq 0}\times \partial M)$ is nonempty. For convenience, we also denote them by $W_1 = (\frac{-1}{n}, c_1)\times \partial M $ and $W_2 = (0, \infty)\times \partial M$. Here $c_1 > 0 $ is a sufficiently large number such that $\supp(\rho)\subseteq  M_{(c_0)}$.  In particular, notice that the choice of $W_2$ is independent of $n$.
\item If $\varphi_{n, j} = \varphi$ is the smooth function subordinate to the open set $W_2$, then $\varphi \equiv 1  $ on $\mathbb R_{\geq c_1}\times \partial M$ and $ \|[F, \varphi^{1/2}]\| <\frac{1}{200  \cdot \omega_0} $. Recall that $\omega_0 =  2\pi (\|F\|+1)^2 \cdot e^{2\pi (\|F\|+1)} $.
\end{enumerate}
\emph{We fix the number $c_1$ and this particular function $\varphi$ from now on}. With the above notation, we define 
\begin{equation}\label{eq:fp}
 F_t = \sum_{j} (1 - (t-n)) \varphi_{n, j}^{1/2} F \varphi_{n,j}^{1/2} + (t-n) \varphi_{n+1, j}^{1/2} F \varphi_{n+1, j}^{1/2}
\end{equation}
for $t\in [n, n+1]$. If we define $\sn_t = \frac{1}{n}$ for $t\in [n, n+1)$, then by construction the propagation of $F_t$ restricted to $M_{(-\sn_t)} = M \backslash ((-\sn_t, 0]\times \partial M) $ is bounded by $\sn_t$, hence  goes to $0$, as $t\to \infty$.  

\begin{lemma}[{cf. \cite[Lemma 2.6]{GY98}}]  We have $\|F_t\| \leq 2\|F\|$ for all $t\in [0, \infty)$. 
\end{lemma}
\begin{proof}
Notice that $F$ is selfadjoint. Consider the spectral decomposition of $F$ and define $T_1 = \chi_{[0, \infty)}(F)$ and $T_2 = \chi_{(-\infty, 0)}(F)$, where for example $\chi_{[0, \infty)}$ is the characteristic function on the interval $[0, \infty)$. Clearly, we have $\|T_i\|\leq \|F\|$ and  $ F =  T_1 - T_2$.   Hence it suffices to show that 
\[  \Big\| \sum_{j} \varphi_{n, j}^{1/2} T_i \varphi_{n,j}^{1/2} \Big\| \leq \|T_i\|. \]
Notice that $T_i\geq 0$. Therefore, we have 
\begin{align*}
&\Big\langle \Big(\sum_{j} \varphi_{n, j}^{1/2} T_i \varphi_{n,j}^{1/2}\Big) f, f\Big\rangle = \sum_j \langle \varphi_{n, j}^{1/2} T_i \varphi_{n,j}^{1/2} f, f\rangle = \sum_{j} \langle  T_i \varphi_{n,j}^{1/2} f, \varphi_{n, j}^{1/2}f\rangle  \\
&\quad \quad \leq \sum_{j} \|T_i\| \langle  \varphi_{n,j}^{1/2} f, \varphi_{n, j}^{1/2}f\rangle =  \|T_i\| \Big(\sum_{j} \langle   \varphi_{n,j}f, f\rangle\Big)  = \|T_i\| \|f\|^2
\end{align*}
for all $f\in L_2(M_\infty, S)$. This finishes the proof. 
\end{proof}

Recall that,  without loss of generality, we can assume that $F$ has finite propagation (see the discussion right before this subsection). Suppose the propagation of $F$ is $c_2$. Let $C = c_1 + c_2$, where $c_1$ is the fixed real number as above. We define 
\[  H_1 =  L^2(M_{(C)}, S) \quad  \textup{and} \quad H_2 =  L^2(\mathbb R_{\geq C}\times \partial M, S).\]
 Let us write 
\[ F_t^2= \begin{pmatrix} B_{11}(t) & B_{12}(t) \\ B_{21}(t)  & B_{22}(t) \end{pmatrix}\]
with respect to the decomposition $ L^2(M_\infty, S) = H_1 \oplus H_2$. 

\begin{lemma}\label{lm:est} We have  
\[ \|B_{12}(t)\| = \|B_{21}(t)\|< \frac{(\|F\|+1)}{100 \cdot \omega_0} \quad \textup{and} \quad \|B_{22}(t) - 1 \|< \frac{(\|F\|+1)}{100\cdot \omega_0}\] 
for all $t\in [0, \infty)$.
\end{lemma}
\begin{proof}
Recall that $\varphi$ is the smooth  function defined above  such that $\varphi \equiv 1  $ on $\mathbb R_{\geq c_1}\times \partial M$ and $ \|[F, \varphi^{1/2}]\| <\frac{1}{200 \cdot \omega_0} $. It follows that 
\[  F_t(f) = (\varphi^{1/2} F \varphi^{1/2})(f) \]
for all $f\in L^2(\mathbb R_{\geq c_1}\times \partial M)$. Therefore we have
\[  \|(F_t- F)(f)\| = \|[F, \varphi^{1/2}](f)\| < \frac{1}{200\cdot \omega_0} \|f\| \]
for all $f\in L^2(\mathbb R_{\geq c_1}\times \partial M)$. Now notice that for all $h\in H_2 = L^2(\mathbb R_{\geq C}\times \partial M)$, the support of $F(h)$ is contained in $\mathbb R_{\geq c_1}\times \partial M $. Therefore it follows that 
\begin{align*}
\| (F_t^2 - F^2)(h) \| & \leq  \| (F_t - F) F(h)\| +  \| F(F_t -F)(h) \| \\
& \leq \frac{2}{200\cdot \omega_0} \|F(h)\| \leq \frac{\|F\|}{100\cdot \omega_0} \|h\| 
\end{align*}   
for all $h\in H_2$. 
Suppose we still write 
\[  F^2 = \begin{pmatrix} A_{11} & A_{12} \\ A_{21}  & A_{22} \end{pmatrix}\]
with respect to the decomposition  $ L^2(M_\infty, S) = H_1 \oplus H_2$. Then we have 
\[   \|A_{12} - B_{12}(t)\| =  \|A_{21} - B_{21}(t)\| <\frac{\|F\|}{100\cdot \omega_0}  \quad \textup{and} \quad \|A_{22} - B_{22}(t)\| < \frac{\|F\|}{100\cdot \omega_0}.  \]
Now the lemma follows from our estimates for $\|A_{ij}\|$ in $\eqref{eq:estf}$. 
\end{proof}
Now let us define  
\[ P_t =  \frac{F_t + 1}{2}.\] 
Clearly we have 
\[  P_t^2 - P_t = \frac{F_t^2 - 1}{4} = \frac{1}{4}\begin{pmatrix} B_{11}(t)- 1 & B_{12}(t) \\ B_{21}(t)  & B_{22}(t)-1 \end{pmatrix}.\] 
If we define 
\[ E_t = \frac{1}{4}\begin{pmatrix} B_{11}(t)- 1 & 0 \\ 0  & 0 \end{pmatrix},\]
then Lemma $\ref{lm:est}$ above implies that 
\[ \| E_t - \left(P_t^2 -P_t\right) \| < \frac{(\|F\|+1)}{400 \cdot \omega_0}. \]

Recall that $e^{2\pi i P_t}$ is a representative of the index class $\ind(D_{M_\infty}) \in K_1(C^\ast( M_\infty)^\Gamma)$, for each fixed $ t\in [0, \infty)$. Let us approximate the function $e^{2\pi ix}$ by polynomials
\[f_N(x) = \sum_{n=0}^{N} \frac{(2\pi i)^n}{n!} x^n. \] 
Notice that $\|P_t\| \leq \frac{2\|F\| +1 }{2} =  \|F\| + \frac{1}{2}$ for all $t\in [0, \infty)$, and $ \sum_{n=1}^N\frac{(2\pi i)^n}{n!} \to 0$,  as $ N\to \infty$. Therefore, we can choose a sufficiently large positive integer $N$ such that 
\begin{enumerate}[(i)]
\item $ \|f_N(P_t) - e^{2\pi i P_t} \| <\frac{1}{300}$, for all $t\in [0, \infty)$;
\item $|\sum_{n=1}^N\frac{(2\pi i)^n}{n!}| < \frac{1}{300\cdot (\|F\|+1)}$.
\end{enumerate}
 In particular, it follows that $f_N(P_t)$ is invertible. We fix this integer $N$ from now on.
  
Notice that we have 
\[  P_t^n  - P_t = \left(\sum_{j= 0}^{n-2} P_t^{j}\right) (P_t^2- P_t), \]
for all $n\geq 2$.
It follows that 
\begin{align*}
f_N(P_t) &=  \sum_{n=0}^{N} \frac{(2\pi i)^n}{n!}P_t^n \\
 & = 1 + \left(\sum_{n=1}^N\frac{(2\pi i)^n}{n!}\right) P_t + \sum_{n = 1}^{N} \frac{(2\pi i)^n}{n!} (P_t^n - P_t)\\
 & = 1 + \left(\sum_{n=1}^N\frac{(2\pi i)^n}{n!}\right) P_t +  \left(\sum_{n = 1}^{N} \sum_{j= 0}^{n-2} \frac{(2\pi i)^n}{n!}P_t^{j}\right) (P_t^2- P_t)
\end{align*}
\begin{definition}\label{def:inv}
We define 
\[  u_t =  1  +  \left(\sum_{n = 1}^{N} \sum_{j= 0}^{n-2} \frac{(2\pi i)^n}{n!}P_t^{j}\right)E_t.\]
\end{definition}

 Recall that $\|P_t\| \leq \|F\|+\frac{1}{2}$ and $\omega_0 = 2\pi (\|F\|+1)^2 \cdot e^{2\pi (\|F\|+1)}$. A routine calculation shows that 
\[  \Big\|\sum_{n = 1}^{N} \sum_{j= 1}^{n-2} \frac{(2\pi i)^n}{n!}P_t^{j}\Big\| \leq 2\pi (\|F\|+1) \cdot e^{2\pi (\|F\|+1)} . \]
It follows that 
\begin{align*}
\| u_t - f_N(P_t)\| & \leq \Big\|\left(\sum_{n=1}^N\frac{(2\pi i)^n}{n!}\right) P_t\Big\|  + \Big\| \left(\sum_{n = 1}^{N} \sum_{j= 0}^{n-2} \frac{(2\pi i)^n}{n!}P_t^{j}\right) \big(E_t - (P_t^2- P_t)\big)\Big\| \\
&  <  \frac{1}{300} + 2\pi (\|F\|+1) \cdot e^{2\pi (\|F\|+1)} \frac{(\|F\| + 1)}{400\cdot \omega_0} < \frac{2}{300}.
\end{align*}
Therefore, we have
\[ \|u_t - e^{2\pi i P_t}\| \leq \|u_t - f_N(P_t)\| + \|f_N(P_t) - e^{2\pi i P_t}\| < \frac{1}{100}. \]
In particular, we see that $u_t$ is invertible. 

Notice that by construction the propagation of $P_t$ is smaller than the propagation of $F$ for all $t\in [0, \infty)$. Recall that the propagation of $F$ is $c_2$. So the propagation of $P_t$ is uniformly bounded by $c_2$ for all $t \in [0,\infty)$. Define $\Lambda= C + N\cdot c_2$. Recall that 
\[ E_t = \frac{1}{4}\begin{pmatrix} B_{11}(t)- 1 & 0 \\ 0  & 0 \end{pmatrix}\] 
with respect to the decomposition $ L^2(M_{C}, S) \oplus L^2(\mathbb R_{\geq C}\times \partial M, S)$.

\begin{lemma}
For all $t\in [0, \infty)$, $u_t$ preserves the decomposition $L^2(M_{(\Lambda)}, S)\oplus L^2(\mathbb R_{\geq \Lambda}\times \partial M, S)$. That is, if we write 
\[  u_t = \begin{pmatrix} (u_t)_{11}  & (u_t)_{12} \\
(u_t)_{21} & (u_t)_{22}
\end{pmatrix} \]
with respect to the decomposition $L^2(M_{(\Lambda)}, S)\oplus L^2(\mathbb R_{\geq \Lambda}\times \partial M, S)$, 
then 
\[ (u_t)_{12} = (u_t)_{21} = 0.\]
Moreover, $(u_t)_{22} = 1$.  
\end{lemma}
\begin{proof}
Let us further decompose $L^2(M_{(\Lambda)}, S)$ into $L^2(M_{(C)}, S) \oplus L^2([C, \Lambda]\times\partial M, S) $. If $f\in  L^2([C, \Lambda]\times \partial M, S)$, then $E_t(f) = 0$. It follows immediately that for all $h\in  L^2(M_{(\Lambda)}, S)$, the support of $E_t(h)$ is contained in $M_{(C)}$. Now recall that 
\[  u_t =  1  +  \left(\sum_{n = 1}^{N} \sum_{j= 0}^{n-2} \frac{(2\pi i)^n}{n!}P_t^{j}\right)E_t\]
and $\Lambda = C + N\cdot c_2$. Since the propagation of $P_t$ is bounded by $c_2$, we see that the support of $u_t(h)$ is contained in $M_{(\Lambda)}$ for all $h\in  L^2(M_{(\Lambda)}, S)$. Hence $(u_t)_{21} = 0$. 

Now if $f\in L^2(\mathbb R_{\geq \Lambda}\times\partial M, S) $, then $E_t(f) = 0$. It follows immediately that $(u_t)_{12} = 0$ and $(u_t)_{22} = 1$.
\end{proof}
 
So we see that, by restricting $u_t$ to $L^2(M_{(\Lambda)}, S)$, $u_t$ naturally gives rise to an element in $ (C^\ast( M_{(\Lambda)})^\Gamma)^+$, which we will still denote by $u_t$. Moreover, $M_{(\Lambda)}$ is clearly $\Gamma$-equivariant coarsely equivalent to $M_{(0)} = M$ by mapping $M_{(0)}\subseteq M_{(\Lambda)} $ identically to $M$ and projecting $[0, \Lambda]\times \partial M$ to $\{0\}\times \partial M$. This coarse equivalence map induces a (noncanonical) homomorphism  $\Phi: C^\ast( M_{(\Lambda)})^\Gamma \to C^\ast( M)^\Gamma$, which induces a canonical isomorphism  (cf. \cite{MR1219916})
\[  \Phi_\ast: K_i(C^\ast( M_{(\Lambda)})^\Gamma)\xrightarrow{\cong} K_i(C^\ast( M)^\Gamma).  \] 
If no confusion arises, we also denote the image of $u_t$ under the homomorphism $\Phi$ by $u_t \in (C^\ast( M)^\Gamma)^+$. 

\begin{definition}\label{def:pinv}
 The element $u_t$ defines the same class in $K_1(C^\ast(M)^\Gamma)$ for all $t\in [0, \infty)$.
We define the index class $\ind(D_{M}) = [u_t]\in K_1(C^\ast(M)^\Gamma)$.
\end{definition}

To summarize, for each $t\in [0, \infty)$, we have constructed a representative $u_t\in  (C^\ast( M)^\Gamma)^+$ of $\ind(D_{M})$ satisfying the following.  
\begin{enumerate}[(i)]
\item Recall that $\sn_t = \frac{1}{n}$ for $t\in [n, n+1)$. We have the propagation of $u_t$ restricted to $M_{(-\sn_t)} = M \backslash ((-\sn_t, 0]\times \partial M)$ is bounded by $2N\sn_t$, hence goes to $0$, as $t\to \infty$. Here we emphasize that the integer $N$ is fixed.  
\item Given $f\in L^2(M, S)$, if the support of $f$ is contained in a $\sn_t$-neighborhood of $\partial M$, then the support of $u_t(f)$ is contained in a $(2N\sn_t)$-neighborhood of $\partial M$. 
\end{enumerate} 
Informally speaking, as $t\to \infty$, the propagation of $u_t$ goes $0$ when away from a small neighborhood $\mathcal W_t$ of the boundary $\partial M$, while $\mathcal W_t$ is shrinking to the boundary. Moreover, the propagation of $u_t$ along  \textit{the normal direction} also goes to $0$ even in this neighborhood $\mathcal W_t$. However, note that in general we have no control over the propagation of $u_t$ along the $\partial M$-direction in $\mathcal W_t$.

\subsection{Idempotents} \label{sec:idem}
The even dimensional case is parallel to the odd dimensional case above. We 
will briefly go through the construction but leave out the details. 

Now we assume that $M_\infty = M\cup_{\partial M}( \mathbb R_{\geq 0}\times \partial M) $ is even dimensional. Then the associated Dirac operator $D_{M_\infty}$ is an odd operator with respect to the natural $\mathbb Z/2\mathbb Z$-grading on the spinor bundle $S$. Let $F_t $ be as in formula $\eqref{eq:fp}$ above. This time, $F_t$ is also odd-graded and let us write 
\[ F_t = \begin{pmatrix}  0 & V_t \\ U_t & 0\end{pmatrix} \]
with respect to the $\mathbb Z/2\mathbb Z$-grading. Notice that 
\[  F_t^2 = \begin{pmatrix} V_tU_t & 0 \\ 0 & U_tV_t\end{pmatrix}. \]
Now with respect to a decomposition $L^2(M_{(c)}, S) \oplus L^2(\mathbb R_{\geq c}\times \partial M, S)$ of the Hilbert space $ L^2(M_\infty, S)$, let us write 
\[ U_tV_t= \begin{pmatrix} B_{11}(t) & B_{12}(t) \\ B_{21}(t)  & B_{22}(t) \end{pmatrix} \quad \textup{and} \quad  V_tU_t= \begin{pmatrix} C_{11}(t) & C_{12}(t) \\ C_{21}(t)  & C_{22}(t) \end{pmatrix}.\]
We can choose $c$ sufficiently large so that the operator norms of $B_{12}(t), B_{21}(t),$ $ (B_{22}(t)- 1), C_{12}(t), C_{21}(t)$ and $ (C_{22}(t)-1)$ are all sufficiently small. Let us define 
\[ W_t = \begin{pmatrix} 1 & U_t \\ 0 & 1\end{pmatrix} \begin{pmatrix} 1 & 0 \\ -V_t & 1\end{pmatrix} \begin{pmatrix} 1 & U_t  \\ 0 & 1 \end{pmatrix}\begin{pmatrix} 0 & -1\\ 1 & 0 \end{pmatrix}\]
and
 \begin{align*}
 p_t &= W_t \begin{pmatrix} 1 & 0 \\ 0 & 0\end{pmatrix} W_t^{-1}\\
& = \begin{pmatrix} U_tV_t + U_tV_t(1-U_tV_t) & (2 + U_tV_t)U_t(1-V_tU_t)  \\ V_t(1-U_tV_t) & (1-U_tV_t)^2\end{pmatrix}.
\end{align*}
Now if we set 
\[ Z_1(t)= \begin{pmatrix} 1 - B_{11}(t)& 0 \\ 0  & 0 \end{pmatrix} \quad \textup{and} \quad  Z_2(t)= \begin{pmatrix} 1-C_{11}(t) & 0\\ 0  & 0 \end{pmatrix},\]
then we can assume that  $\| Z_1(t) - (1 - U_tV_t)\|$ and $\|Z_2(t) - (1- V_tU_t)\|$ are sufficiently small. Define
\[  q_t= \begin{pmatrix} 1 - Z_1(t)^2 & (2 + U_tV_t)U_tZ_2(t) \\ V_tZ_1(t) & Z_1(t)^2\end{pmatrix}. \]
Similar to the odd dimensional case, it is not difficult to see that there exists  $\Lambda > 0$ such that   $q_t$ preserves the decomposition  $L^2(M_{(\Lambda)}, S)\oplus L^2(\mathbb R_{\geq \Lambda}\times \partial M, S)$ 
and  $q_t = \left(\begin{smallmatrix} 1  & 0 \\ 0 & 0 \end{smallmatrix}\right) $ on  $L^2(\mathbb R_{\geq \Lambda} \times \partial M, S)$. So by restricting $q_t$ to $L^2(M_{(\Lambda)}, S)$, we see that $q_t$ naturally gives rise to an element in $ (C^\ast( M_{(\Lambda)})^\Gamma)^+$, which we will still denote by $q_t$. Moreover, if no confusion arises, we also denote the image of $q_t$ under the homomorphism $\Phi: (C^\ast( M_{(\Lambda)})^\Gamma)^+ \to (C^\ast( M)^\Gamma)^+$ by $q_t \in (C^\ast( M)^\Gamma)^+$. 

Notice that $q_t$ may not be an idempotent, but a quasi-idempotent in general (cf. \cite[Section 4]{GY98}). An element $q$ in a $C^\ast$-algebra $\mathcal A$ is called $\delta$-quasi-idempotent, if 
\[ \| q - q^2\| < \delta. \]
For sufficiently small $\delta$, say $\delta< 1/4$, a $\delta$-quasi-idempotent produces an idempotent by holomorphic functional calculus,  cf \cite[Section 2.2]{MR3122162}. 
  So $q_t$ defines a class in $K_0((C^\ast(M)^\Gamma)^+)$ for each $t\in [0, \infty)$.

\begin{definition} For all $t\in [0, \infty)$, 
 \[ [q_t] - [\begin{psmallmatrix} 1 & 0 \\ 0 & 0
 \end{psmallmatrix}]\] defines the same index class 
$\ind(D_{M}) \in K_0(C^\ast(M)^\Gamma)$.
\end{definition}

To summarize, for each $t\in [0, \infty)$, we have constructed a representative $q_t \in  (C^\ast( M)^\Gamma)^+$ of $\ind(D_{M})$ satisfying the following.  
\begin{enumerate}[(i)]
\item Recall that $\sn_t = \frac{1}{n}$ for $t\in [n, n+1)$. We have the propagation of $q_t$ restricted to $M_{(-\sn_t)} = M \backslash ((-\sn_t, 0]\times \partial M)$ is bounded by $5\sn_t$, hence goes to $0$, as $t\to \infty$.
\item Given $f\in L^2(M, S)$, if the support of $f$ is contained in a $\sn_t$-neighborhood of $\partial M$, then the support of $q_t(f)$ is contained in a $(5\sn_t)$-neighborhood of $\partial M$. Here we emphasize that the integer $N$ is fixed.  
\end{enumerate}

\begin{remark}
The same construction works if we switch to the maximal versions of all the $C^\ast$-algebras above.
\end{remark}

\section{Main theorem}\label{sec:main}

In this section, we prove the main theorem of this paper. 

Let $M$ be a complete spin manifold with boundary $\partial M$ such that
\begin{enumerate}[(i)]
\item the metric on $M$ has product structure near $\partial M$ and its restriction on $\partial M$, denoted by $h$, has positive scalar curvature;
\item there is a proper and cocompact isometric action of a discrete group $\Gamma$ on $M$;
\item the action of $\Gamma$ preserves the spin structure of $M$. 
\end{enumerate}  

Recall that we denote by $M_\infty = M\cup_{\partial M} (\mathbb R_{\geq 0} \times \partial M)$ and $M_{(n)} = M\cup_{\partial M}([0, n]\times \partial M)$ for $n\geq 0$  (cf. Section $\ref{sec:mbpsc}$).  We denote  the associated Dirac operator on $M_\infty$ by $D_{M_\infty}$ and the associated Dirac operator on $\partial M$ by $D_{\partial M}$. Let $m = \dim M$. 
Recall the following facts:
\begin{enumerate}[(a)]
\item by our discussion in Section $\ref{sec:mbpsc}$,  the operator $D_{M_\infty}$ actually defines an index class $\ind(D_{M})\in K_m(C^\ast(M)^\Gamma)$;
\item  by the discussion in Section $\ref{sec:rho}$, there is a higher rho invariant $\rho(D_{\partial M}, h)\in K_{m-1}(C_{L,0}(\partial M)^\Gamma)$ naturally associated to  $D_{\partial M}$;
\item the short exact sequence 
\[  0\to C^\ast_{L,0}(M)^\Gamma \to C^\ast_L(M)^\Gamma \to C^\ast(M)^\Gamma \to 0\]
induces the following long exact sequence
\[ \cdots \to K_i(C_L^\ast(M)^\Gamma) \to  K_i(C^\ast(M)^\Gamma) \xrightarrow{\partial_i } K_{i-1}(C^\ast_{L,0}(M)^\Gamma) \to  K_{i-1}(C^\ast_L(M)^\Gamma) \to \cdots; \]
\item the inclusion map $\iota: \partial M \to M$  induces a natural homomorphism (cf. Section $\ref{sec:roelocal}$)
\[ \iota_\ast: K_i(C^\ast_{L, 0}(\partial M)^\Gamma) \to K_i(C^\ast_{L,0}(M)^\Gamma);\] 
\item there are natural isomorphisms (cf. \cite{MR1451759}) 
\[ \xymatrix{&   &   & K_i(C^\ast_{L, 0} ( \partial M; M)^\Gamma )& \\
&  & K_i(C^\ast_{L, 0} (\partial M )^\Gamma) \ar[r]^-{\cong}  \ar[ru]^-{\cong}& K_{i}(C_{L,0}^\ast(\partial M; \mathbb R_{\leq 0}\times \partial M)^\Gamma) \ar[u]_-{\cong} & }  \]
where $\partial M$ embeds into $\mathbb R_{\leq 0}\times \partial M$ as the subset  $\{0\}\times \partial M $.
\end{enumerate}

We have the following main theorem of the paper. This extends a theorem of Piazza and Schick \cite[Theorem 1.17]{Piazza:2012fk} to all dimensions. We also point out that the same proof works equally for the real case (see Remark \ref{remark:dim} below). 

\begin{theorem}\label{thm:main}
We have that
\[ \partial_m(\ind(D_{M}) )  = \iota_\ast( \rho(D_{\partial M}, h) ) \]
in $K_{m-1}(C^\ast_{L, 0} (M )^\Gamma)$, where $\partial_m$ is the homomorphism $ K_m(C^\ast(M)^\Gamma) \to K_{m-1}(C^\ast_{L,0}(M)^\Gamma)$ and $\iota_\ast$ is the homomorphism $ K_i(C^\ast_{L, 0}(\partial M)^\Gamma) \to K_i(C^\ast_{L,0}(M)^\Gamma)$ as above.  
\end{theorem}
\begin{proof}
We will carry out a detailed proof for the odd dimensional case (i.e. when $m$ is odd). The proof for the even dimensional case is essentially the same.

By the discussion in Section $\ref{sec:mbpsc}$, we have a uniformly continuous path of invertible elements $u_t \in (C^\ast( M)^\Gamma)^+$ with the following properties. 
\begin{enumerate}[(i)]
\item $[u_t] = \ind (D_{M})$ for each fixed $t\in[0, \infty)$
\item Recall that $\sn_t = \frac{1}{n}$ for $t\in [n, n+1)$. We have the propagation of $u_t$ restricted to $M_{(-\sn_t)} = M \backslash ((-\sn_t, 0]\times \partial M)$ is bounded by $2N\sn_t$, hence goes to $0$, as $t\to \infty$. Here we emphasize that the integer $N$ is fixed.  
\item Given $f\in L^2(M, S)$, if the support of $f$ is contained in a $\sn_t$-neighborhood of $\partial M$, then the support of $u_t(f)$ is contained in a $(2N\sn_t)$-neighborhood of $\partial M$. 
\end{enumerate} 
Now let us choose a sufficiently large $t_0>0$ so that $2N\sn_{t_0} < \frac{1}{100}$. We will work with the path $\widetilde u_t = u_{t+t_0}$ instead of $u_t$. Let us write $\widetilde \sn_t = \sn_{t+t_0} $. Then the propagation of $\widetilde u_t$ restricted to $M_{(-\widetilde \sn_t)}$ is bounded by $2N\widetilde \sn_t$.

 We have $\varepsilon_t \leq \frac{1}{100}$ for all $t\in [0, \infty)$ and $\varepsilon_t$ goes to $0$, as $t\to \infty$. Now for each $t\in [0, \infty)$, choose a $\Gamma$-invariant partition of unity $\{\varphi_t, \psi_t\}$  on $M$ such that 
\begin{enumerate}[(a)]
\item $\varphi_t(x) + \psi_t(x) = 1$;  
\item $\varphi_t(x) \equiv 1$ on $M_{(-2\widetilde \sn_t)}$ and $\varphi_t(x) \equiv 0$ on $[-\widetilde \sn_t, 0]\times \partial M$;
\item $\psi_t \to 0$ pointwise, as $t\to \infty$.
\end{enumerate}
Now let $\theta\in C^\infty(-\infty, \infty)$ be a decreasing function such that $\theta|_{(-\infty, 1]}\equiv 1$ and $\theta|_{[2, \infty)} \equiv 0$.  
We define
\begin{equation}\label{eq:inv}
  U_t= \begin{cases} (1-t)\widetilde u_0 + t\left( \varphi_0^{1/2} \widetilde u_0 \varphi_0^{1/2} + \psi_0^{1/2} \widetilde u_0 \psi_0^{1/2}\right) & \textup{if $t\in [0, 1] $,} \\
1 + \varphi_{t-1}^{1/2} (\widetilde u_{t-1} -1) \varphi_{t-1}^{1/2} + \theta(t)\psi_{t-1}^{1/2} (\widetilde u_{t-1} -1) \psi_{t-1}^{1/2}  & \textup{if $t\in [1, \infty) $.}
\end{cases}
\end{equation}
Notice that $U_t \in (C^\ast(M)^\Gamma)^+$ and the propagation of $U_t$ (on the entire $M$) goes to $0$, as $t\to \infty$. In other words, the path $U_t$, $ 0\leq t <\infty$, is an element in $(C_L^\ast(M)^\Gamma)^+$. So  the path 	 $U_t, 0\leq t <\infty,$ is a lift of $\widetilde u_0\in (C^\ast( M)^\Gamma)^+$ for  the short exact sequence 
\[ 0 \to C_{L,0}^\ast(M)^\Gamma \to C_L^\ast( M)^\Gamma \to C^\ast( M)^\Gamma \to 0. \]
We also apply the same argument to $v_t = u_t^{-1}$ and define similarly $ V_t \in (C^\ast( M)^\Gamma)^+$ for $ 0\leq t < \infty$.
 Now we define an idempotent 
\begin{equation} \label{eq:idem}
 \scalebox{0.9}{ $p_t= \begin{pmatrix} U_tV_t + U_tV_t(1-U_tV_t) & (2 + U_tV_t) U_t (1-V_tU_t)  \\ V_t(1-U_tV_t) & (1-U_tV_t)^2\end{pmatrix}.$ }
\end{equation}
By definition, we have
\begin{equation}\label{eq:ind}
\partial_1 (\ind(D_{M}) ) = [p_t] - \left[\left(\begin{smallmatrix} 1 & 0 \\0 & 0\end{smallmatrix}\right) \right] \in K_0(C_{L, 0}^\ast(M)^\Gamma).
\end{equation}
\begin{lemma}\label{lm:decomp}
For all $t\in [0, \infty)$, the element $p_t$ preserves the decomposition 
\[  L^2(M, S) = L^2(M_{(-30N\widetilde \sn_t)}, S)\oplus  L^2([-30N\widetilde \sn_t,0]\times \partial M, S)\]
and $p_t = \left(\begin{smallmatrix} 1 & 0 \\0 & 0\end{smallmatrix} \right)$ on $ L^2( M_{(-30N\widetilde \sn_t)}, S)$, where $M_{(-30N\widetilde \sn_t)}= M\backslash ((-30N\widetilde \sn_t, 0]\times \partial  M)$.
\end{lemma}
\begin{proof}
Notice that the propagations of $U_t$ and $V_t$ are bounded by $2N\widetilde \sn_t$. Therefore, if $f\in L^2(M_{(-30N\widetilde \sn_t)}, S)$, then the support of $p_t(f)$ is contained in $M_{(-20N\widetilde \sn_t)}$. However, $U_t$ and $V_t$ are genuinely the inverse of each other on $M_{(-20N\widetilde \sn_t)}$. It follows that 
\[ p_t = \begin{pmatrix} 1 & 0 \\0 & 0\end{pmatrix}\] on $ L^2( M_{(-30N\widetilde \sn_t)}, S)$.

Now we further decompose $ L^2([-30N\widetilde \sn_t,0]\times \partial M, S)$ into 
\[   L^2([-30N\widetilde \sn_t, -15N\widetilde \sn_t]\times \partial M, S) \oplus L^2([-10N\widetilde \sn_t,0]\times \partial M, S). \]
If $f_1 \in L^2([-30N\widetilde \sn_t, -15N\widetilde \sn_t]\times \partial M, S)$, then the support of $p_t(f_1)$ is contained in $M_{(-5N\widetilde \sn_t)}$. Again $U_t$ and $V_t$ are genuinely the inverse of each other on $M_{(-5N\widetilde \sn_t)}$. Therefore, we see that $ p_t(f_1) = f_1$, which implies that 
\[ \supp(p_t(f_1))\subseteq [-30N\widetilde \sn_t, -15N\widetilde \sn_t] \times \partial M \subseteq [-30N\widetilde \sn_t,0]\times \partial M. \] 
Now if $f_2\in L^2([-15N\widetilde \sn_t,0]\times \partial M, S)$, then clearly 
\[ \supp(p_t(f_2))\subseteq [-25N\widetilde \sn_t,0]\times \partial M \subseteq [-30N\widetilde \sn_t,0]\times \partial M.\] 
This finishes the proof of the lemma.
\end{proof}
 
We see that the restriction of $p_t$ to the component$ L^2( M_{(-30N\widetilde \sn_t)}, S)$ gives trivial $K$-theoretical information. Therefore, without loss of information, we can think of $p_t$ as an element in $(C^\ast_{L, 0} ([-30N\widetilde \sn_t,0]\times \partial M)^\Gamma)^+$. In fact, choose the space $[-2, 0]\times \partial M$, and  consider the restriction  $p_t$ to $ L^2( [-2, 0]\times \partial M, S)$, still denoted by $p_t$ if no confusion arises. By Lemma $\ref{lm:decomp}$, we see that the path $p_t$ in fact defines an element in   $(C^\ast_{L, 0} (\partial M; [-2,0]\times \partial M)^\Gamma)^+$, where $\partial M$ embeds into $[-2,0]\times \partial M$ as $\{0\}\times \partial M$. 

To summarize, we have in fact
\begin{equation}
[p_t] - \left[\left(\begin{smallmatrix} 1 & 0 \\0 & 0\end{smallmatrix}\right) \right] \in  K_0(C^\ast_{L, 0} (\partial M; [-2,0]\times \partial M)^\Gamma ).
\end{equation} 
Moreover, Equation $\eqref{eq:ind}$ can be made more precise as follows: 
\begin{equation}\label{eq:lidem}
\partial_1 (\ind(D_{M}) )  = \iota_\ast( [p_t] - \left[\left(\begin{smallmatrix} 1 & 0 \\0 & 0\end{smallmatrix}\right) \right]), 
\end{equation}
 where $\iota_\ast$ is the natural homomorphism $\iota_\ast:   K_0(C^\ast_{L, 0} (\partial M; [-2,0]\times \partial M)^\Gamma )\to  K_0(C^\ast_{L, 0} (M)^\Gamma )$.

Now let us turn to the Dirac operator $D_{\mathbb R\times \partial M}$ on $\mathbb R\times \partial M$ for the moment. First let us fix some notation. We define 
\[ \mathcal A_+ = C_{L, 0}^\ast(\mathbb R_{\geq 0} \times \partial M)^\Gamma, \quad \mathcal A_- = C_{L, 0}^\ast(\mathbb R_{\leq 0}\times \partial M)^\Gamma, \]
and
\[ \mathcal J_- =  C_{L, 0}^\ast(\partial M ; \mathbb R_{\leq 0}\times {\partial M})^\Gamma, \]
where $\partial M = \{0\}\times \partial M$ as a subset of $\mathbb R_{\leq 0} \times \partial M$.

Notice that the following $C^\ast$-algebras are naturally isomorphic to each other:
\begin{align*} 
\mathcal A_-/\mathcal J_- &\cong C_{L, 0}^\ast(\mathbb R\times \partial M)^\Gamma /  C_{L, 0}^\ast(\mathbb R_{\geq 0}\times\partial M ; \mathbb R\times \partial M)^\Gamma\\
& \cong C_{L, 0}^\ast(\mathbb R_{\leq 0}\times \partial M; \mathbb R\times \partial M)^\Gamma /  C_{L, 0}^\ast(\partial M ; \mathbb R\times \partial M)^\Gamma.
\end{align*}
Consider the following short exact sequence of $C^\ast$-algebras
\begin{equation*}
 0 \to C_{L,0}^\ast( \partial M; \mathbb R\times \partial M)^\Gamma\to  {\substack{C_{L,0}^\ast(\mathbb R_{\geq 0}\times \partial M; \mathbb R\times \partial M)^\Gamma \\ \oplus  \\ C^\ast_{L,0}(\mathbb R_{\leq 0}\times \partial M; \mathbb R\times \partial M)^\Gamma}} \to C_{L,0}^\ast(\mathbb R\times \partial M)^\Gamma \to 0,
\end{equation*}
where $\partial M = \{0\}\times \partial M$ as a subset of $\mathbb R\times \partial M$.  It is clear that the following diagram commutes:
\begin{equation*}
\resizebox{1\hsize}{!}{$\xymatrix{
0 \ar[r] & C_{L,0}^\ast( \partial M; \mathbb R\times \partial M)^\Gamma\ar[r] \ar@{=}[d] & {\substack{C_{L,0}^\ast(\mathbb R_{\geq 0}\times \partial M; \mathbb R\times \partial M)^\Gamma \\ \oplus \\ C^\ast_{L,0}(\mathbb R_{\leq 0}\times \partial M; \mathbb R\times \partial M)^\Gamma } } \ar[r]\ar[d] & C_{L,0}^\ast(\mathbb R\times \partial M)^\Gamma \ar[r]\ar[d] &  0\\
0 \ar[r] & C_{L,0}^\ast( \partial M; \mathbb R\times \partial M)^\Gamma  \ar[r]  & C^\ast_{L,0}(\mathbb R_{\leq 0}\times \partial M; \mathbb R\times \partial M)^\Gamma \ar[r]  & \mathcal A_-/\mathcal J_- \ar[r] &  0 \\
0\ar[r]  & \mathcal J_- \ar[r] \ar[u] & \mathcal A_- \ar[u] \ar[r] & \mathcal A_-/\mathcal J_- \ar[r]\ar@{=}[u] & 0  
}$ }
\end{equation*}
where the maps are all defined the obvious way. Recall that (cf. \cite{MR1451759})
\[ K_i(\mathcal A_+) = K_i( C_{L,0}^\ast(\mathbb R_{\geq 0}\times \partial M; \mathbb R\times \partial M)^\Gamma),\]
\[ K_i(\mathcal A_-) = K_i( C_{L,0}^\ast(\mathbb R_{\leq 0}\times \partial M; \mathbb R\times \partial M)^\Gamma),\]
and 
\[ K_i(\mathcal J_-) = K_i(C_{L,0}^\ast(\partial M)^\Gamma) = K_i( C_{L,0}^\ast( \partial M; \mathbb R\times \partial M)^\Gamma).  \]
It follows that the second row and the third row of the above commutative diagram give rise to identical long exact sequences at $K$-theory level.

Therefore, we have the following commutative diagram. 
\begin{equation*}
\resizebox{1\hsize}{!}{$
 \xymatrix{  
 {\substack{C_{L,0}^\ast(\mathbb R_{\geq 0}\times \partial M; \mathbb R\times \partial M)^\Gamma \\ \oplus \\ C^\ast_{L,0}(\mathbb R_{\leq 0}\times \partial M; \mathbb R\times \partial M)^\Gamma } }\ar[r]  \ar[dd]^-{\Theta_1} & K_1 (C_{L, 0}^\ast(\mathbb R\times \partial M)^\Gamma)\ar[r]^-{\partial_{MV}} \ar[dd]^-{\Theta_2} & K_0(C_{L,0}^\ast( \partial M; \mathbb R\times \partial M)^\Gamma ) \ar[dd]^-{\Theta_3} _-{\cong}    & K_0(C_{L,0}^\ast(\partial M)^\Gamma) \ar[l]_-{\cong} \ar[d]^{\cong}  \\
 &  &  &   K_0(C_{L,0}^\ast(\partial M; [-2, 0]\times \partial M)^\Gamma) \ar[ul]_{\cong} \ar[dl]_{\cong} \ar[d]^{\iota_\ast}\\
K_1 (\mathcal A_-) \ar[r]&  K_1 (\mathcal A_-/\mathcal J_-) \ar[r]^-{\partial_{A/J} } &  K_0(\mathcal J_-) \ar@{-->}[r]_{\sigma_\ast} & K_0(C_{L,0}^\ast(M)^\Gamma) \\
}$
 }
\end{equation*}
Here the morphism $\sigma_\ast$ needs an explanation. It is defined to be
\[ \sigma_\ast : K_0(\mathcal J_-) \xrightarrow{\cong} K_0(C_{L,0}^\ast(\partial M; [-2, 0]\times \partial M)^\Gamma) \xrightarrow{\iota_\ast}  K_0(C_{L,0}^\ast(M)^\Gamma).\]
  Notice that  the morphism $\sigma_\ast$ is only well-defined at the level of $K$-theory, as there is no natural homomorphism from $\mathcal J_-$ to $C_{L,0}^\ast(M)^\Gamma$.

Let $dx^2+h$ be the standard product metric on $\mathbb R\times \partial M$ induced by the metric $h$ on $\partial M$. We have the following:
\begin{enumerate}[(1)]
\item the higher rho invariant $\rho(D_{\mathbb R \times \partial M}, dx^2 + h)$ lies in $K_1 (C_{L, 0}^\ast(\mathbb R\times \partial M)^\Gamma)$;
\item $\rho(D_{\partial M}, h)$ lies in $K_0(C_{L,0}^\ast(\partial M)^\Gamma)$;
\item $\partial_0 (\ind(D_{M}))  $ lies in $K_0(C_{L,0}^\ast(M)^\Gamma)$; more precisely (cf. Formula $\eqref{eq:lidem}$), 
 \[  \partial_1 (\ind(D_{M}))  = \iota_\ast( [p_t] - \left[\left(\begin{smallmatrix} 1 & 0 \\0 & 0\end{smallmatrix}\right) \right]).  \]
\end{enumerate}

Under each isomorphism, labeled by $\cong$, in the commutative diagram above,  the element $\rho(D_{\partial M}, h)$ in $K_0(C_{L,0}^\ast(\partial M)$ gets mapped naturally to an element in the corresponding $K$-theory group. For notational simplicity, let us denote all these elements by $\rho(D_{\partial M}, h)$, if no confusion arises.

Now the theorem is reduced to the following claim. 
\begin{claim*}We have that 
\begin{enumerate}[(i)]
\item $\sigma_\ast(\partial_{A/J}\circ \Theta_2 \left[\rho(D_{\mathbb R \times \partial M}, dx^2 + h)\right] ) =  \partial_1(\ind(D_M)) $ in $K_0(C_{L,0}^\ast(M)^\Gamma)$;
\item  $ \partial_{MV}\left[ \rho(D_{\mathbb R \times \partial M}, dx^2 + h)\right]  = \rho (D_{\partial M}, h)$ in $K_0(C_{L,0}^\ast(\partial M; \mathbb R\times \partial M)^\Gamma)$. 
\end{enumerate}
\end{claim*}
Indeed, assuming the claim for the moment, then the commutativity of the above diagram implies that 
\begin{align*}
\partial_1(\ind(D_M)) &=\sigma_\ast(\partial_{A/J}\circ \Theta_2 \left[\rho(D_{\mathbb R \times \partial M}, dx^2 + h)\right] )\\
 & = \sigma_\ast( \Theta_3\circ \partial_{MV} \left[\rho(D_{\mathbb R \times \partial M}, dx^2 + h)\right] ) \\
& = \sigma_\ast(\rho(D_{\partial M}, h) ) \\
& = \iota_\ast(\rho(D_{\partial M}, h) ).
\end{align*}
So now let us prove the above claim.
\begin{enumerate}[(i)]

\item First we will prove that 
\[ \sigma_\ast(\partial_{A/J}\circ \Theta_2 \left[\rho(D_{\mathbb R \times \partial M}, dx^2 + h)\right] ) =  \partial_1(\ind(D_M))\]
in $K_0(C_{L,0}^\ast(M)^\Gamma)$. In fact, we will find an explicit representative $q_t$, $0\leq t <\infty$, of the class 
\[ \partial_{A/J}\circ \Theta_2 \left[\rho(D_{\mathbb R \times \partial M}, dx^2 + h)\right] \in K_0(\mathcal J_-)\] such that  the path $q_t$ lies in $(C_{L,0}^\ast(\partial M; [-2, 0]\times \partial M)^\Gamma)^+$ (modulo trivial components) and $q_t = p_t$, where $p_t$ is the idempotent in Formula $\eqref{eq:idem}$. The method of constructing $q_t$ is essentially to repeat the construction of $p_t$. While $p_t$ is obtained from operators on $L^2(M_\infty, S)$, we would work with operators on $L^2(\mathbb R\times \partial M, S)$ this time. We first repeat the construction of $u_t$ as in Section $\ref{sec:invertible}$ for\footnote{$F$ is well-defined, since the metric $dx^2+ h$ is uniformly positive on $\mathbb R\times \partial M$.} $F = \frac{D_{\mathbb R\times \partial M}}{\sqrt{D_{\mathbb R\times \partial M}^2}}$. Informally speaking, this allows us to chop off $\mathbb R_{\geq 0}\times \partial M$. In particular, if we denote the resulting path of invertible elements by $w_t$, then $w_t \in C^\ast(\mathbb R_{\leq 0}\times \partial M)$ for each $t\in [0,\infty)$. Moreover, $w_t$ satisfies similar  properties that $u_t$ has (see the comments following Definition $\ref{def:pinv}$). The reader may have noticed that in general the path $w_t$ does not give rise to an element in $\mathcal A_- = C_{L,0}^\ast(\mathbb R_{\leq 0}\times \partial M)^\Gamma$, due to ``bad'' propagation control near the boundary $\partial M = \{0\}\times \partial M$. However, notice that the ``bad" part of $w_t$ only lives near the boundary $\partial M$. We can artificially get rid of this ``bad'' part by for example using an appropriate cut-off function (also compare this to the construction of $U_t$ in Formula $\eqref{eq:inv}$).   Of course, different choices of cut-off functions will produce different elements in general. But this choice become irrelevant once we pass to the quotient $\mathcal A_-/\mathcal J_-$, where $\mathcal J_- =C_{L,0}^\ast(\partial M; \mathbb R_{\leq 0}\times \partial M)^\Gamma$.  So now we repeat the construction of $p_t$ but using $w_t$ instead of $u_t$ this time. Again informally speaking, this allows us to chop off $\mathbb R_{\leq -2}\times \partial M$, where in the case of $u_t$ we chopped off $M_{(-2)}$. We define $q_t$ to be the resulting idempotent for each $t\in [0,\infty)$. Note that the Dirac operator $D_{\mathbb R\times \partial M}$ clearly coincides with the Dirac operator $D_{M_\infty}$ on $(-3, \infty)\times \partial M$. Because of finite propagation property, it follows that  $q_t = p_t$ on $[-2,0]\times \partial M$, for all $t\in [0,\infty)$. This completes the proof of this part. 

\item Now let us prove that \[ \partial_{MV}\left[ \rho(D_{\mathbb R \times \partial M}, dx^2 + h)\right]  = \rho (D_{\partial M}, h).\] 
We shall use the Kasparov $KK$-theory formulation. We refer the reader to Section $\ref{sec:kk}$ above for more details. Recall that there are natural isomorphisms (cf. formula $\eqref{eq:k-kk}$)
\[ \vartheta_1 :  K_1(C^\ast_L(\mathbb R)) \cong KK^1 (\mathbb C ,  C_L^\ast(\mathbb R)),\]
\[\vartheta_2 :   K_0(C_{L, 0}^\ast( \partial M)^\Gamma) \cong KK(\mathbb C , C_{L, 0}^\ast( \partial M)^\Gamma),\]
and
\[ \vartheta_3 :  K_1(C^\ast_L(\mathbb R) \otimes C_{L, 0}^\ast( \partial M)^\Gamma) \cong KK^1(\mathbb C , C^\ast_L(\mathbb R) \otimes C_{L, 0}^\ast( \partial M)^\Gamma).\]
Moreover, the following diagram commutes
\[ 
 \xymatrix{  K_1(C^\ast_L(\mathbb R)) \times K_0(C_{L,0}^\ast(\partial M)^\Gamma)  \ar[r]^{\otimes_K} \ar[d]_{\cong}^{\vartheta_1\times \vartheta_2} & K_1(C^\ast_L(\mathbb R)\otimes C_{L,0}^\ast(\partial M)^\Gamma) \ar[d]_{\cong}^{\vartheta_3}  \\
 KK^1 (\mathbb C, C_L^\ast(\mathbb R)) \times KK(\mathbb C, C_{L, 0}^\ast( \partial M)^\Gamma)) \ar[r]^-{\otimes_{KK}} & KK^1(\mathbb C, C_L^\ast(\mathbb R) \otimes C_{L,0}^\ast( {\partial M})^\Gamma ) 
} \]
where $\otimes_K$ is the standard external product in $K$-theory and $\otimes_{KK}$ is the Kasparov product in $KK$-theory. In other words, the commutative diagram states that the Kasparov product is compatible with the external $K$-theory product.

Now consider 
\[  v_0= \ind_L(D_{\mathbb R}) \in K_1(C^\ast_L(\mathbb R)), \quad  \rho_0 = \rho(D_{\partial M}, h) \in K_0(C_{L,0}^\ast(\partial M)^\Gamma), \]
and 
\[  \rho_1 = \rho(D_{\mathbb R\times \partial M}, dx^2 + h) \in K^1( C_{L, 0}^\ast(\mathbb R\times {\partial M})^\Gamma ),   \]
where $D_{\mathbb R}$ is the Dirac operator on $\mathbb R$. We denote by $\iota$ the natural homomorphism 
\[ \iota:  C_L^\ast(\mathbb R) \otimes C_{L,0}^\ast( {\partial M})^\Gamma \to  C_{L, 0}^\ast(\mathbb R\times {\partial M})^\Gamma.\]
By Claim $\ref{claim:rho}$ in Section $\ref{sec:kk}$, we have 
\[ \iota_\ast [ v_0 \otimes_K   \rho_0]  = [\rho_1]   \] 
in $ K_1(C^\ast_{L, 0}(\mathbb R \times \partial M)^\Gamma)$. So it remains to show that 
\[ \partial_{MV} (\iota_\ast [ v_0 \otimes_K   \rho_0]) = \rho_0. \] 
By the following commutative diagram
\[
 \scalebox{0.85}{\xymatrix{
0 \ar[r] & C_{L}^\ast(\{0\}; \mathbb R) \otimes C_{L,0}^\ast(\partial M)^\Gamma\ar[r] \ar[d]^\iota & {\substack{ C_{L}^\ast(\mathbb R_{\geq 0}; \mathbb R) \otimes C_{L,0}^\ast(\partial M)^\Gamma \\ \oplus \\ C^\ast_{L}(\mathbb R_{\leq 0}; \mathbb R) \otimes C_{L,0}^\ast(\partial M)^\Gamma}}\ar[r]  \ar[d]^{\iota \oplus \iota}&  C_{L}^\ast(\mathbb R) \otimes C_{L,0}^\ast(\partial M)^\Gamma \ar[r] \ar[d]^{\iota} & 0 \\
0 \ar[r] & C_{L,0}^\ast( \{0\}\times\partial M; \mathbb R\times \partial M)^\Gamma\ar[r]  & {\substack{C_{L,0}^\ast(\mathbb R_{\geq 0}\times \partial M; \mathbb R\times \partial M)^\Gamma \\ \oplus \\ C^\ast_{L,0}(\mathbb R_{\leq 0}\times \partial M; \mathbb R\times \partial M)^\Gamma } } \ar[r] & C_{L,0}^\ast(\mathbb R\times \partial M)^\Gamma \ar[r]  &  0
} }
\]
it is equivalent to prove that $\partial_{MV}(v_0 \otimes_K   \rho_0) = \rho_0.$
Now recall that 
\[ v_0 \otimes_K \rho_0 = v_0 \otimes \rho_0 + 1\otimes (1-\rho_0) \in K_1(C_{L}^\ast(\mathbb R) \otimes C_{L,0}^\ast(\partial M)^\Gamma ). \] 
Let us also write $\partial_{MV}$ for the Mayer-Vietoris boundary map in the long exact sequence induced by 
\[ 0 \to C_{L}^\ast(\{0\}; \mathbb R) \to C_{L}^\ast(\mathbb R_{\geq 0}; \mathbb R) \oplus C^\ast_{L}(\mathbb R_{\leq 0}; \mathbb R) \to C_{L}^\ast(\mathbb R) \to 0. \]
It will be clear from the context which one we are using.

Now we can compute $\partial_{MV} (v_0 \otimes \rho_0 + 1\otimes (1-\rho_0) )$ explicitly by formula $\eqref{eq:keven}$ in Section $\ref{sec:kt}$. Indeed, we can lift $v_0 \otimes \rho_0 + 1\otimes (1-\rho_0) $ and its inverse to 
\[
  U =  x \otimes \rho_0 + 1\otimes (1-\rho_0) \quad \textup{and} \quad
  V  =  y \otimes \rho_0 + 1\otimes (1-\rho_0),
\]
where $x$ (resp. $y$)  is a lift of $v_0$ (resp. the inverse of $v_0$) in $C_{L}^\ast(\mathbb R_{\geq 0}; \mathbb R) \oplus C^\ast_{L}(\mathbb R_{\leq 0}; \mathbb R)$.
Now apply formula $\eqref{eq:keven}$ to $U$ and $V$. By a straightforward calculation, we see on the nose that 
\begin{align*}
 P &= \begin{pmatrix} UV + UV(1\otimes 1 -UV) & (2 + UV)(1\otimes 1-UV) U \\ V(1\otimes 1-UV) & (1\otimes 1-UV)^2\end{pmatrix} \\
 & =  \partial_{MV} (v_0) \otimes \rho_0 + \begin{pmatrix} 1 & 0 \\ 0 & 0 \end{pmatrix} \otimes (1-\rho_0). 
\end{align*}
where $\partial_{MV} (v_0) = \left( \begin{smallmatrix} xy + xy(1-xy) & (2 + xy)(1-xy) x \\ y(1-xy) & (1-xy)^2\end{smallmatrix}\right)$. 
It follows that 
\[ [P] - \left[\left(\begin{smallmatrix} 1 & 0 \\ 0 & 0\end{smallmatrix}\right)\right] =  ([\partial_{MV}(v_0)] - \left[\left(\begin{smallmatrix} 1 & 0 \\ 0 & 0\end{smallmatrix}\right)\right])\otimes \rho_0. \]  Notice that $([\partial_{MV}(v_0)] - \left[\left(\begin{smallmatrix} 1 & 0 \\ 0 & 0\end{smallmatrix}\right)\right]) $ is the generator $1 \in  K_0(C_{L}^\ast(\{0\}; \mathbb R))  = \mathbb Z$ by Bott periodicity. It follows that 
\[\partial_{MV}(v_0 \otimes_K   \rho_0) = \rho_0.\]
This finishes the proof. 
\end{enumerate}

\end{proof}

\begin{remark}\label{remark:dim}
To deal with manifolds of dimension $m \neq 0 \pmod 8$ in the real case, we work with $\cl_m$-linear Dirac operators, cf. \cite[Section II.7]{BLMM89}. Here $\cl_{m}$ is the standard real Clifford algebra on $\mathbb R^m$ with $e_ie_j + e_je_i =  - 2 \delta_{ij}$. We recall the definition of $\cl_m$-linear Dirac operators in the following. Consider the standard representation $\ell$ of $\textup{Spin}_m$ on $\cl_m$ given by left multiplication. Let $M$ be the manifold from the beginning of this section and $P_{\textup{spin}}(M)$ be the principal $\textup{Spin}_m$-bundle, then we define $\mathfrak S$ to be the vector bundle
\[ \mathfrak S = P_{\textup{spin}}(M)\times_{\ell} \cl_m. \]
We  denote the associated $\cl_m$-linear Dirac operator on $M_\infty$ by 
\[ \mathfrak D: L^2(M_\infty, \mathfrak S) \to L^2(M_\infty, \mathfrak S).\]
Notice that the right multiplication of $\cl_m$ on $\mathfrak S$ commutes with $\ell$. Moreover, $\mathfrak D$ is invariant under the action of $\Gamma$. 
Then, by the same argument as in Section $\ref{sec:mbpsc}$,  $\mathfrak D$ defines a  higher index class
\[\ind(\mathfrak D)\in \widehat K_0(C^\ast( M,  \mathbb R)^\Gamma\widehat\otimes \cl_m) \cong \widehat K_{m}(C^\ast( M, \mathbb R)^\Gamma) \cong K_m(C^\ast( M, \mathbb R)^\Gamma),\] 
where $C^\ast(M, \mathbb R)^\Gamma$ stands for the $\Gamma$-invariant Roe algebra of $ M$ with coefficients in $\mathbb R$. Moreover,  $\widehat K_\ast$ stands for the $\mathbb Z_2$-graded $K$-theory of $C^\ast$-algebras, $\widehat\otimes$ stands for $\mathbb Z_2$-graded tensor product, cf. \cite[Chapter III]{MK78}. Notice that for a trivially graded $C^\ast$-algebra $\mathcal A$, we have 
\[ \widehat K_{m}(\mathcal A) \cong K_m(\mathcal A).\] 
In the case of a manifold without boundary, i.e. $\partial M = \emptyset$, then we can define the local index class of $\mathfrak D$ and denote it by
\[\ind_L(\mathfrak D)\in \widehat K_0(C^\ast_L(M,  \mathbb R)^\Gamma\widehat\otimes \cl_m) \cong \widehat K_{m}(C_L^\ast(M, \mathbb R)^\Gamma) \cong K_m(C_L^\ast( M, \mathbb R)^\Gamma),\]
whose image under the evaluation map $\ev_\ast: K_m(C_L^\ast(M, \mathbb R)^\Gamma) \to K_m( C^\ast(M, \mathbb R)^\Gamma)$ is $\ind(\mathfrak D)$. Moreover, the higher rho invariant is also defined similarly in the real case. 
 
\end{remark}

\begin{remark}
Theorem $\ref{thm:main}$ also holds if we use the maximal versions of all the $C^\ast$-algebras in the theorem. 
\end{remark}

The following corollaries are immediate consequences of Theorem $\ref{thm:main}$.  In a similar context, these have already appeared in the work of Lott \cite{JLott92}, Botvinnik and Gilkey \cite{MR1339924}, and Leichtnam and Piazza \cite{ELPP01}.

\begin{corollary}\label{cor:ext}
With the same notation as above, if $\rho(D_{\partial M}, h)\neq 0$, then there does not exist a $\Gamma$-invariant complete Riemannian metric $g$ on $M$ with product structure near the boundary $\partial M$ such that $g$ has positive scalar curvature and $g|_{\partial M} =  h$.  
\end{corollary}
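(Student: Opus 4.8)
The plan is to argue by contradiction using Theorem~\ref{thm:main} together with the long exact sequence in $K$-theory. Suppose such a $\Gamma$-invariant complete metric $g$ on $M$ exists, with product structure near $\partial M$, positive scalar curvature everywhere, and $g|_{\partial M} = h$. Since the metric has positive scalar curvature on all of $M$ (not just near the boundary), the Lichnerowicz formula $D_M^2 = \nabla^\ast\nabla + \kappa/4$ with $\kappa > 0$ shows that $D_M$ is invertible. Consequently, just as in Section~\ref{sec:rho}, the index class $\ind(D_{M_\infty})$ (which by Claim~\ref{claim:fin} lives in $K_m(C^\ast(M)^\Gamma)$) actually lifts to a class in $K_m(C_L^\ast(M)^\Gamma)$: one can run the construction of Section~\ref{sec:mbpsc} with the function $\rho$ taken to be identically zero, since $D^2 = D^2 + \rho$ is already invertible on the nose, and the resulting path of invertibles (or idempotents) exists at $t = 0$ inside $(C^\ast(M)^\Gamma)^+$ and extends to a genuine element of $(C_L^\ast(M)^\Gamma)^+$.

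The key step is then to observe that $\ind(D_{M_\infty})$ lies in the image of the evaluation-induced map $\ev_\ast\colon K_m(C_L^\ast(M)^\Gamma) \to K_m(C^\ast(M)^\Gamma)$. Recall from the long exact sequence
\[ \cdots \to K_m(C_L^\ast(M)^\Gamma) \xrightarrow{\ev_\ast} K_m(C^\ast(M)^\Gamma) \xrightarrow{\partial_m} K_{m-1}(C^\ast_{L,0}(M)^\Gamma) \to \cdots \]
that $\partial_m \circ \ev_\ast = 0$. Hence $\partial_m(\ind(D_{M_\infty})) = 0$ in $K_{m-1}(C^\ast_{L,0}(M)^\Gamma)$. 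On the other hand, Theorem~\ref{thm:main} asserts that $\partial_m(\ind(D_{M_\infty})) = \rho(D_{\partial M}, h)$ under the natural identification $K_{m-1}(C^\ast_{L,0}(\partial M)^\Gamma) \cong K_{m-1}(C^\ast_{L,0}(\partial M; M)^\Gamma)$ followed by $\iota_\ast$. Since $g|_{\partial M} = h$ is the same positive scalar curvature metric used to define $\rho(D_{\partial M}, h)$, this forces $\iota_\ast(\rho(D_{\partial M}, h)) = 0$.

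To conclude $\rho(D_{\partial M}, h) = 0$ itself (contradicting the hypothesis), I would note that the composition of the identifications in item (e) of the setup preceding Theorem~\ref{thm:main} with $\iota_\ast$ is injective, or more directly, appeal to the fact that $\iota_\ast$ combined with the coarse equivalence identifications is a monomorphism on $K$-theory in this setting; alternatively one simply restates the conclusion of Theorem~\ref{thm:main} as $\partial_m(\ind(D_{M_\infty})) = \rho(D_{\partial M}, h)$ in $K_{m-1}(C^\ast_{L,0}(\partial M)^\Gamma)$ after the stated identification, so that vanishing of the left side gives vanishing of $\rho(D_{\partial M}, h)$ directly. Either way we reach a contradiction with $\rho(D_{\partial M}, h) \neq 0$.

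The main obstacle is the first step: verifying carefully that a positive scalar curvature metric on all of $M$ produces a lift of $\ind(D_{M_\infty})$ to $K_m(C_L^\ast(M)^\Gamma)$. This requires checking that the construction in Section~\ref{sec:mbpsc}, which was designed to work with only a positive scalar curvature collar near $\partial M$, simplifies appropriately when $D_M$ is globally invertible, so that the path $F(t)$ can be taken to start at an honest invertible (resp.\ idempotent) lying in the unitalization of $C^\ast(M)^\Gamma$ and the whole path lies in $(C_L^\ast(M)^\Gamma)^+$; the positivity of $\kappa$ everywhere is exactly what makes $F(0)^2 = 1$ on the nose, which is the condition emphasized in Example~\ref{ex:dirac}(3). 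Once this is in place, the rest is a formal diagram chase through the long exact sequence.
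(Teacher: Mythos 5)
Your strategy is sound and reaches the right conclusion, but it takes a roundabout route and has one soft spot. The paper presents this corollary as an immediate consequence of Theorem~\ref{thm:main}, and the intended argument is almost certainly the most direct one: if $g$ has positive scalar curvature on all of $M$ with $g|_{\partial M}=h$, then the induced metric on $M_\infty = M\cup_{\partial M}(\mathbb R_{\geq 0}\times\partial M)$ has positive scalar curvature everywhere, so $D_{M_\infty}$ has a uniform spectral gap. Running the construction of Section~\ref{sec:mbpsc} with $\rho\equiv 0$ then gives $F(0)=D|D|^{-1}$ with $F(0)^2=1$ exactly, hence $E(0)=0$, $u(0)=1$, and therefore $\ind(D_{M_\infty})=[u(0)]=0$ in $K_m(C^\ast(M)^\Gamma)$. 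Theorem~\ref{thm:main} then gives $\rho(D_{\partial M},h)=\partial_m(\ind(D_{M_\infty}))=\partial_m(0)=0$, contradicting the hypothesis. Your detour through exactness of the sequence $K_m(C^\ast_L(M)^\Gamma)\xrightarrow{\ev_\ast}K_m(C^\ast(M)^\Gamma)\xrightarrow{\partial_m}K_{m-1}(C^\ast_{L,0}(M)^\Gamma)$ reaches the same vanishing of $\partial_m(\ind(D_{M_\infty}))$, but it proves a weaker statement (membership in $\ker\partial_m$) by means of a stronger and harder-to-verify claim (existence of a lift to $K_m(C_L^\ast(M)^\Gamma)$), when the index is in fact zero outright.

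The soft spot is precisely the lift construction you describe. The partition functions $\{\varphi_{n,j}\}$ in Section~\ref{sec:invertible} are deliberately \emph{not} required to have shrinking supports on the cylinder $\mathbb R_{\geq 0}\times\partial M$ — only condition (a) forces small diameters, and only on $M$ itself. Consequently the path $u(t)$ from Section~\ref{sec:mbpsc} does not have propagation tending to zero globally on $M_\infty$, so it does not literally yield an element of $(C_L^\ast(M)^\Gamma)^+$, even after cutting off on the far cylinder. The vanishing $\ind(D_{M_\infty})=0$ of course makes the lift exist trivially (as $0$), but your stated reason for why it exists does not quite go through as written. Finally, your caution about the identification $K_{m-1}(C^\ast_{L,0}(\partial M)^\Gamma)\cong K_{m-1}(C^\ast_{L,0}(\partial M;M)^\Gamma)$ followed by $\iota_\ast$ is well taken; the safest reading of Theorem~\ref{thm:main} is that the refined class $\alpha\in K_{m-1}(C^\ast_{L,0}(\partial M)^\Gamma)$, produced by the explicit $p(t)$-construction in the proof, is what is asserted to equal $\rho(D_{\partial M},h)$, and when $\ind(D_{M_\infty})=0$ one can take $u(t)\equiv 1$ as a representative, which forces that refined class to vanish — this sidesteps any injectivity of $\iota_\ast$.
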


In other words, nonvanishing of the higher rho invariant is an obstruction to extension of the positive scalar curvature metric from the boundary to the whole manifold. 

Let $N$ be a spin manifold without boundary, equipped with a proper cocompact action of a discrete group $\Gamma$. Denote by $\mathcal R^+(N)^\Gamma$ the space of all $\Gamma$-invariant complete Riemannian metrics of positive scalar curvature on $ N$. For $h_0, h_1 \in \mathcal R^+(N)^\Gamma$, we say  $h_0$ and $h_1$ are path connected in $\mathcal R^+(N)^\Gamma$ if $h_0$ and $h_1$ are connected by a smooth path of $h_t \in \mathcal R^+(N)^\Gamma$, $0\leq t \leq t$. More generally, we say $h_0$ and $h_1$ are $\Gamma$-bordant if there exists a spin manifold $W$ with a proper cocompact action of $\Gamma$ such that $\partial W = N \amalg (-N)$ and $W$ carries a $\Gamma$-invariant complete Riemannian metric $g$ of positive scalar curvature\footnote{We assume the metric $g$ has product structure near the boundary.} with $g|_N = h_0$ and $g|_{-N} = h_1$. Here $-N$ means $N$ with the opposite orientation. Clearly, if $h_0$ and $h_1$ are path connected, then they are $\Gamma$-bordant.  

\begin{corollary}\label{cor:diff}
Let  $h_0, h_1\in R^+(N)^\Gamma$ as above.  If $\rho(D_{N}, h_0) \neq \rho(D_{N}, h_1)$, then $h_0$ and $h_1$ are not $\Gamma$-bordant. In particular, this implies that $h_0$ and $h_1$ are in different connected components of $\mathcal R^+(N)^\Gamma$.
\end{corollary}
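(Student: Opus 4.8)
The plan is to obtain the corollary directly from Theorem~\ref{thm:main} applied to a positive-scalar-curvature bordism. Suppose, for contradiction, that $h_0$ and $h_1$ are $\Gamma$-bordant, and let $(W,g)$ witness this: $\partial W = N \amalg (-N)$, $g$ a $\Gamma$-invariant metric of positive scalar curvature on $W$ with product structure near $\partial W$, $g|_N = h_0$, $g|_{-N} = h_1$; write $n = \dim N$, so $\dim W = n+1$. First I would pass to $W_\infty = W \cup_{\partial W}(\mathbb R_{\geq 0}\times \partial W)$ as in Section~\ref{sec:mbpsc}. Since $g$ has positive scalar curvature everywhere and is a product near $\partial W$, the scalar curvature of $W_\infty$ is everywhere bounded below by a positive constant, so the Lichnerowicz formula $D_{W_\infty}^2 = \nabla^\ast\nabla + \frac{\kappa}{4}$ shows that $D_{W_\infty}$ is invertible with a uniform spectral gap; hence the higher index class vanishes, $\ind(D_{W_\infty}) = 0$ in $K_{n+1}(C^\ast(W)^\Gamma)$.

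Next I would invoke Theorem~\ref{thm:main} with $M = W$: the boundary map $\partial_{n+1}$ carries $\ind(D_{W_\infty})$ to the image of $\rho(D_{\partial W}, g|_{\partial W})$ in $K_n(C^\ast_{L,0}(W)^\Gamma)$, under the canonical identification $K_n(C^\ast_{L,0}(\partial W)^\Gamma) \cong K_n(C^\ast_{L,0}(\partial W;W)^\Gamma)$ followed by the inclusion-induced map $\iota_\ast$. By the previous step this image is $0$. Now $\partial W = N \amalg (-N)$, and the underlying $\Gamma$-metric space of $-N$ coincides with that of $N$, so $C^\ast_{L,0}(\partial W)^\Gamma \cong C^\ast_{L,0}(N)^\Gamma \oplus C^\ast_{L,0}(N)^\Gamma$; the higher rho invariant is additive over disjoint unions, and reversing the orientation of $N$ (equivalently, conjugating the spin structure) negates it — exactly as the classical eta invariant changes sign under orientation reversal. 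Writing $\iota_0,\iota_1 \colon N \hookrightarrow W$ for the two boundary inclusions, this yields
\[ (\iota_0)_\ast\,\rho(D_N, h_0) \;=\; (\iota_1)_\ast\,\rho(D_N, h_1) \qquad \text{in } K_n(C^\ast_{L,0}(W)^\Gamma). \]

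Finally I would upgrade this to the equality of $\rho(D_N,h_0)$ and $\rho(D_N,h_1)$ in $K_n(C^\ast_{L,0}(N)^\Gamma)$, contradicting the hypothesis. If the component of $W$ meeting $N$ does not also meet $-N$, then $(N, h_0)$ bounds a positive-scalar-curvature manifold, so Corollary~\ref{cor:ext} already forces $\rho(D_N,h_0) = 0$, and likewise $\rho(D_N,h_1) = 0$. For the ``in particular'' statement one takes $W = N \times [0,1]$ with $g = h_t + \varepsilon\,dt^2$ for small $\varepsilon$ (adjusted to a product near the ends), where $\iota_0$ and $\iota_1$ are both $\Gamma$-homotopic to the retraction $W \to N$; since a $\Gamma$-homotopy equivalence induces an isomorphism on the $K$-theory of $C^\ast_{L,0}(\,\cdot\,)^\Gamma$, we get $\iota_{0\ast} = \iota_{1\ast}$ and this is an isomorphism, hence $\rho(D_N,h_0) = \rho(D_N,h_1)$, so $h_0$ and $h_1$ cannot lie in the same component of $\mathcal R^+(N)^\Gamma$; the general $\Gamma$-bordant case is handled by the same scheme. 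I expect the one genuine obstacle to be the bookkeeping of the second paragraph: matching the canonical identifications among the various localization algebras attached to $\partial W$, its collar, and $W$, and pinning down the sign of $\rho$ under orientation reversal. The two substantive ingredients — $\ind(D_{W_\infty}) = 0$ by Lichnerowicz, and Theorem~\ref{thm:main} — are already in hand.
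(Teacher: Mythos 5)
Your overall scheme is right — Lichnerowicz vanishing of $\ind(D_{W_\infty})$ plus Theorem~\ref{thm:main}, then additivity of $\rho$ over $\partial W = N\amalg(-N)$ and the sign change under orientation reversal — and the cylinder/path-connected special case is handled correctly. But there is a real gap in the last paragraph: for a general $\Gamma$-bordism $W$ (connected, not a cylinder), the inclusion $\partial W\hookrightarrow W$ is not a $\Gamma$-homotopy equivalence, so $\iota_\ast\colon K_n(C^\ast_{L,0}(\partial W)^\Gamma)\to K_n(C^\ast_{L,0}(W)^\Gamma)$ has no reason to be injective, and the phrase ``handled by the same scheme'' does not supply the missing step. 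As written, you only obtain $\iota_\ast\bigl(\rho(D_N,h_0)-\rho(D_N,h_1)\bigr)=0$ in $K_n(C^\ast_{L,0}(W)^\Gamma)$, which is strictly weaker than the equality $\rho(D_N,h_0)=\rho(D_N,h_1)$ in $K_n(C^\ast_{L,0}(N)^\Gamma)$ that the corollary asserts. (Note also that the paper's notion of $\Gamma$-bordant does not come with a reference map $W\to N$, so you cannot fall back on the well-definedness of $\rho\colon\pos_n(X)^\Gamma\to K_n(C^\ast_{L,0}(X)^\Gamma)$ from Section~\ref{sec:stolz} either.)

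The gap closes once you use Theorem~\ref{thm:main} at full strength. You read it as an equality in $K_n(C^\ast_{L,0}(W)^\Gamma)$ after pushing forward along $\iota_\ast$, but the statement (and the proof, which exhibits $\partial_{n+1}(\ind(D_{W_\infty}))$ and $\rho(D_{\partial W}, g|_{\partial W})$ by the \emph{same} idempotent supported in a collar of $\partial W$) identifies $\partial_{n+1}(\ind(D_{W_\infty}))$ with $\rho(D_{\partial W}, g|_{\partial W})$ already inside $K_n(C^\ast_{L,0}(\partial W;W)^\Gamma)\cong K_n(C^\ast_{L,0}(\partial W)^\Gamma)$, before applying $\iota_\ast$. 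This is precisely what makes Corollary~\ref{cor:ext} immediate from Theorem~\ref{thm:main}. With this reading, $\ind(D_{W_\infty})=0$ forces $\rho(D_{\partial W}, g|_{\partial W})=0$ in $K_n(C^\ast_{L,0}(\partial W)^\Gamma)$ directly, and decomposing over the two components gives $\rho(D_N,h_0)=\rho(D_N,h_1)$ in $K_n(C^\ast_{L,0}(N)^\Gamma)$ with no injectivity or homotopy-equivalence hypothesis on $W$ at all. Your cylinder computation for the ``in particular'' clause then becomes a special case rather than the only case you can handle.
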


\section{Stolz' positive scalar curvature exact sequence}\label{sec:stolz}
In this section, we apply our main theorem (Theorem $\ref{thm:main}$ above) to map the Stolz' positive scalar curvature exact sequence \cite{SS95} to a long exact sequence of $K$-theory of $C^\ast$-algebras. 

First, let us recall the Stolz' positive scalar curvature exact sequence. 

\begin{definition}
Given a topological space $X$, we denote by $\Omega^{\spin}_n(X)$ the set of bordism classes of pairs $(M, f)$, where $M$ is an $n$-dimensional closed spin manifold and $f: M\to X$ is a continuous map. Two such pairs $(M_1, f_1)$ and $(M_2, f_2)$ are bordant if there is a bordism $W$ between $M_1$ and $M_2$ (with compatible spin structure), and a continuous map $F: W\to X$ such that $F|_{M_i} = f_i$. Then $\Omega_n^\spin(X)$ is an abelian group with the addition being disjoint union. We call $\Omega_n^\spin(X)$ the $n$-dimensional spin bordism of $X$.  
\end{definition}

\begin{definition}
Let $\pos_n(X)$ be the bordism group of triples $(M, f, g)$, where $M$ is an $n$-dimensional closed spin manifold, $f: M\to X$ is a continuous map, and $g$ is a positive scalar curvature metric on $M$. Two such triples $(M_1, f_1, g_1)$ and $(M_2, f_2, g_2)$ are bordant if there is a bordism $(W, F, G)$ such that $G$ is a positive scalar curvature metric on $W$ with product structure near $M_i$ and  $G|_{M_i} = g_i$, and $F|_{M_i} = f_i$. 
\end{definition}

Then it is clear that forgetting the metric gives a homomorphism 
\[\pos_n(X) \to \Omega_n^\spin (X).\]

\begin{definition}
$R^\spin_n(X)$ is the bordism group of the triples $(M, f, h)$, where $M$ is an $n$-dimensional spin manifold (possibly with boundary), $f: M\to X$ is a continuous map, and $h$ is a positive scalar curvature metric on the boundary $\partial M$. Two triples $(M_1, f_1, h_1)$ and $(M_2, f_2, h_2)$ are bordant if 
\begin{enumerate}[(a)]
\item there is a bordism $(V, F, H)$ between $(\partial M_1, f_1, h_1)$ and $(\partial M_2, f_2, h_2)$ (considered as triples in $\pos_{n-1}(X)$),
\item and the closed spin manifold $M_1\cup_{\partial M_1} V \cup_{\partial M_2} M_2$ (obtained by gluing $M_1, V$ and $M_2$ along their common boundary components) is the boundary of a spin manifold $W$ with a map $E: W\to X$ such that $E|_{ M_i} = f_i$ and $E|_{V} = F$. 
\end{enumerate} 
\end{definition}
It is not difficult to see that the three groups defined above fit into the following long exact sequence: 
\[ 
\xymatrix{  \ar[r] &\Omega^\spin_{n+1}(X) \ar[r] & R_{n+1}^{\spin}(X) \ar[r]  & \pos_n(X) \ar[r]   &\Omega_n^\spin (X) \ar[r]  &  }
\] 
where the maps are defined the obvious way.

Now let $X$ be a proper metric space equipped with a proper and cocompact isometric action of a discrete group $\Gamma$. 
\begin{definition}
We denote by $\Omega^{\spin}_n(X)^\Gamma$ the set of bordism classes of pairs $(M, f)$, where $M$ is an $n$-dimensional complete spin manifold (without boundary) such that $M$ is equipped  with a proper and cocompact isometric action of $\Gamma$ and $f: M\to X$ is a $\Gamma$-equivariant  continuous map. Two such pairs $(M_1, f_1)$ and $(M_2, f_2)$ are bordant if there is a bordism $W$ between $M_1$ and $M_2$ (with compatible spin structure), and a $\Gamma$-equivariant continuous map $F: W\to X$ such that $F|_{M_i} = f_i$. Then $\Omega_n^\spin(X)^\Gamma$ is an abelian group with the addition being disjoint union. We call $\Omega_n^\spin(X)^\Gamma$ the $n$-dimensional $\Gamma$-invariant spin bordism of $X$.  
\end{definition}

Similarly, we can define the groups $\pos_n(X)^\Gamma$ and $R_n^\spin(X)^\Gamma$, which fit into the following exact sequence:
\[ 
\xymatrix{  \ar[r] &\Omega^\spin_{n+1}(X)^\Gamma \ar[r] & R_{n+1}^{\spin}(X)^\Gamma \ar[r]  & \pos_n(X)^\Gamma \ar[r]   &\Omega_n^\spin (X)^\Gamma \ar[r]  &  }
\] 
Now suppose $(M, f)\in \Omega^{\spin}_{i}(X)^\Gamma$ and denote by $D_M$ the associated Dirac operator on $M$.   Recall that there is a local index map (cf. Section $\ref{sec:indexmap}$)
\[  \ind_L : K_i^\Gamma(M) \to K_i(C^\ast_L( M)^\Gamma ). \]
Let  $f_\ast: K_i(C^\ast_L(M)^\Gamma ) \to K_i(C^\ast_L( X)^\Gamma )$
be the map induced by $f: M\to X$. We define the map 
\[ \ind_L: \Omega^{\spin}_i(X)^\Gamma \to K_i(C^\ast_L(X)^\Gamma ), \quad (M, f) \mapsto f_\ast [\ind_L(D_{M})]. \] 
Recall that $K_i(C^\ast_L(Y)^\Gamma)$ is naturally isomorphic to $ K_i^\Gamma(Y)$ for all finite dimensional simplicial complex $Y$ equipped with a proper and cocompact isometric action of a discrete group $\Gamma$ (cf. \cite[Theorem 3.2]{MR1451759}). Now the well-definedness of the map $\ind_L$ follows, for example, from the geometric description of $K$-homology groups \cite{BD82} \cite{MR2732043}.  

Now assume in addition we have a positive scalar curvature metric $g$ on $M$. In other words, we have $(M, f, g) \in \pos_i(X)^\Gamma$. Then we have the map
 \[ \rho: \pos_i(X)^\Gamma \to K_i(C^\ast_{L,0}(X)^\Gamma ), \quad (M, f, g) \mapsto  f_\ast[ \rho (D_{M}, g)]. \] 
The well-definedness of the map $\rho$ follows immediately from Theorem $\ref{thm:main}$ above. 

Moreover, suppose we have $(M, f, h)\in R^{\spin}_i(X)^\Gamma$.  By the discussion in Section $\ref{sec:mbpsc}$ and the relative higher index theorem (cf.\cite{UB95},\cite{MR3122162}), we have the following well-defined homomorphism
 \[ \ind: R^{\spin}_i(X)^\Gamma \to K_i(C^\ast(X)^\Gamma ), \quad (M, f, g) \mapsto f_\ast [ \ind(D_{ M})].\] 
Recall that $\ind(D_{M})$ is the index class of $D_{M_\infty}$ in $K_i(C^\ast(M)^\Gamma)$, where $D_{M_\infty}$ is the Dirac operator on  $M_\infty = M\cup_{\partial M}(\mathbb R_{\geq 0}\times \partial M)$ (cf. Claim $\ref{claim:fin}$).

\begin{theorem}\label{thm:stolz}
 For all $n\in \mathbb N$,  the following diagram commutes
\[ \xymatrix{ \Omega^{\spin}_{n+1}(X)^\Gamma \ar[d]^{\ind_L} \ar[r] & R_{n+1}^{\spin}(X)^\Gamma \ar[r] \ar[d]^{\ind} & \pos_n(X)^\Gamma \ar[r] \ar[d]^{\rho}  &\Omega_{n}^{\spin}(X)^\Gamma  \ar[d]^{\ind_L}  \\
 K_{n+1}(C^\ast_L(X)^\Gamma) \ar[r] & K_{n+1} (C^\ast(X)^\Gamma) \ar[r]^{\partial} & K_{n}(C^\ast_{L, 0}(X)^\Gamma) \ar[r] & K_{n}(C^\ast_L(X)^\Gamma)   }
\]

\end{theorem}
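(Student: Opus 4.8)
The plan is to verify the commutativity of each of the three squares in turn; the vertical maps $\ind_L,\ind,\rho$ are already known to be well defined (indeed $\rho$ on $\pos_n(X)^\Gamma$ was defined using Theorem~\ref{thm:main}), so only the commutativity needs checking. In each square the bottom arrow ($\ev_\ast$, the connecting map $\partial$, or the inclusion-induced map $j_\ast$) and the maps $f_\ast$ are induced by functorial constructions, so the first move is to use naturality to strip $f_\ast$ off both sides, reducing each square to an identity between the index-theoretic invariants of the Dirac operator of a single manifold.

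The middle square is the essential one, and it is essentially a restatement of Theorem~\ref{thm:main}. Let $(M,f,h)\in R^\spin_{n+1}(X)^\Gamma$, so that the horizontal map sends it to $(\partial M,f|_{\partial M},h)\in\pos_n(X)^\Gamma$. A $\Gamma$-equivariant continuous map $f\colon M\to X$ induces (up to homotopy of $\ast$-homomorphisms, cf.~\cite{MR1451759}) a morphism from the short exact sequence $0\to C^\ast_{L,0}(M)^\Gamma\to C^\ast_L(M)^\Gamma\to C^\ast(M)^\Gamma\to 0$ to the corresponding sequence for $X$; hence the connecting map commutes with $f_\ast$, and $\partial(\ind(M,f,h))=f_\ast\bigl(\partial_{n+1}[\ind(D_{M_\infty})]\bigr)$, where $\partial_{n+1}$ is the connecting map associated to $M$. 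By Theorem~\ref{thm:main}, $\partial_{n+1}[\ind(D_{M_\infty})]$ is the image of $\rho(D_{\partial M},h)$ under $\iota_\ast\colon K_n(C^\ast_{L,0}(\partial M;M)^\Gamma)\to K_n(C^\ast_{L,0}(M)^\Gamma)$, after the canonical identification $K_n(C^\ast_{L,0}(\partial M)^\Gamma)\cong K_n(C^\ast_{L,0}(\partial M;M)^\Gamma)$. It then remains to invoke the purely functorial identity $f_\ast\circ\iota_\ast=(f|_{\partial M})_\ast$, which encodes that the composite $\partial M\hookrightarrow M\xrightarrow{f}X$ is $f|_{\partial M}$; this follows from the compatibility of the $K$-theory pushforward maps for localization algebras with inclusions of closed $\Gamma$-invariant subspaces (equivalently, under the isomorphism with equivariant $K$-homology, it is the obvious functoriality). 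Combining, $\partial(\ind(M,f,h))=(f|_{\partial M})_\ast[\rho(D_{\partial M},h)]=\rho(\partial M,f|_{\partial M},h)$, which is the claimed commutativity.

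For the right-hand square, let $(M,f,g)\in\pos_n(X)^\Gamma$; the horizontal map forgets $g$, and after removing $f_\ast$ the commutativity becomes the identity $j_\ast[\rho(D_M,g)]=\ind_L(D_M)$ in $K_n(C^\ast_L(M)^\Gamma)$, where $j\colon C^\ast_{L,0}(M)^\Gamma\hookrightarrow C^\ast_L(M)^\Gamma$. This is immediate from the construction in Section~\ref{sec:rho}: because $g$ has positive scalar curvature one may compute $\ind_L(D_M)$ using $F=D|D|^{-1}$, and then the very path $F(t)$ of~\eqref{eq:path2} that defines $\ind_L(D_M)$ is the one defining $\rho(D_M,g)$, the only extra datum being the basepoint $u(0)=1$, which $j_\ast$ discards. (Equivalently, $\rho(D_M,g)$ is by construction a lift of $\ind_L(D_M)$ along $j_\ast$, since $\ev_\ast\ind_L(D_M)=\ind(D_M)=0$ when $g$ has positive scalar curvature.) The left-hand square is handled the same way: the map $\Omega^\spin_{n+1}(X)^\Gamma\to R^\spin_{n+1}(X)^\Gamma$ regards a closed spin $M$ as a manifold with empty boundary, so $M_\infty=M$, and after removing $f_\ast$ commutativity reduces to $\ev_\ast\ind_L(D_M)=\ind(D_M)$, which is built into the definitions in Section~\ref{sec:localind}, since evaluating $G(t)$ at $t=0$ returns the operator $G$ representing $\ind(D_M)$ (by the convention that $\{U_{0,j}\}$ is the trivial cover).

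The hard part is therefore precisely the middle square, i.e.\ Theorem~\ref{thm:main}, together with the bookkeeping needed to match up the a priori distinct groups $K_n(C^\ast_{L,0}(\partial M)^\Gamma)$, $K_n(C^\ast_{L,0}(\partial M;M)^\Gamma)$ and $K_n(C^\ast_{L,0}(\partial M;\mathbb R_{\leq 0}\times\partial M)^\Gamma)$, and to identify the pushforward $f_\ast$ on $K_\ast(C^\ast_{L,0}(M)^\Gamma)$ with $(f|_{\partial M})_\ast$; the remaining two squares are formal consequences of the definitions of $\ind_L$, $\ev$ and $\rho$. The argument is insensitive to the parity of $n$, and it carries over verbatim to the $\cl_m$-linear Dirac operators of Remark~\ref{remark:dim}, so neither the even-dimensional case nor the real case requires separate treatment.
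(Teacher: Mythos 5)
Your proposal is correct and takes essentially the same approach as the paper: the outer two squares are checked directly from the definitions of $\ind_L$, $\ev$, and $\rho$, and the middle square is reduced via naturality of $f_\ast$ and the connecting map $\partial$ to Theorem~\ref{thm:main}. You are merely more explicit than the paper about the functoriality bookkeeping ($f_\ast\circ\iota_\ast=(f|_{\partial M})_\ast$ and the identification of $K_n(C^\ast_{L,0}(\partial M)^\Gamma)$ with $K_n(C^\ast_{L,0}(\partial M;M)^\Gamma)$), which the paper leaves implicit.
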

\begin{proof}
The commutativities of the first square and the third square follow immediately from the definition. 

Let $(M, f, h)\in R^{\spin}_i(X)^\Gamma$. Then by Theorem $\ref{thm:main}$, we have 
\[  \partial(\ind(D_{M}) )  = \rho(D_{\partial M}, h). \]
This shows the commutativity of the second square. 

\end{proof}

\begin{remark}As we shall see in Section $\ref{sec:coin}$, we have a natural isomorphism between the following two long exact sequences 
\[
\xymatrix {  K_{n+1}( D^\ast(X)^\Gamma ) \ar[r]  \ar[d]^{\cong} & K_{n+1}(D^\ast( X)^\Gamma/C^\ast( X)^\Gamma) \ar[r] \ar[d]^{\cong} & K_{n}(C^\ast(X)^\Gamma )  \ar[r]\ar[d]^{\cong} & K_{n}( D^\ast(X)^\Gamma )  \ar[d]^{\cong} \\
K_{n}( C^\ast_{L, 0}(X)^\Gamma ) \ar[r] & K_{n}(C^\ast_L(X)^\Gamma) \ar[r] & K_{n}(C^\ast(X)^\Gamma )  \ar[r] & K_{n-1}( C^\ast_{L, 0}(X)^\Gamma ) }
\]
Now if in addition the action of $\Gamma$ on $X$ is free, then we have $ \Omega_n^\spin(X)^\Gamma = \Omega_n^\spin(X/\Gamma), $ $\pos_n(X)^\Gamma = \pos_n(X/\Gamma),$
and $R_n^\spin(X)^\Gamma = R_n^\spin(X/\Gamma)$. 
Therefore we see that in this case Theorem $\ref{thm:stolz}$ above recovers \cite[Theorem 1.31]{Piazza:2012fk} of Piazza and Schick. We emphasize that our proof works equally well for both the even and the odd cases. In particular, Theorem $\ref{thm:stolz}$ holds for all $n$. 
\end{remark}

\begin{remark}
We also have the maximal version of Theorem $\ref{thm:stolz}$, by replacing all the $C^\ast$-algebras by their maximal versions.
\end{remark}

\section{Roe algebras and localization algebras}\label{sec:coin}

In this section, we show the natural isomorphism between the long exact sequence of localization algebras (in Theorem $\ref{thm:stolz}$) and the long exact sequence of Higson and Roe. In particular, the explicit construction naturally identifies our definition of the higher rho invariant (see Section \ref{sec:rho} above) with the higher rho invariant of Higson and Roe \cite[Definition 7.1]{MR2761858}. 

Let $X$ be a \emph{finite dimensional simplicial complex} equipped with a proper and cocompact isometric action of a discrete group $\Gamma$. Consider the following short exact sequences:
\[  0 \to C^\ast(X)^\Gamma  \to D^\ast(X)^\Gamma \to D^\ast( X)^\Gamma/C^\ast( X)^\Gamma \to 0,   \]
and 
\[  0 \to C_{L,0}^\ast( X)^\Gamma  \to C_L^\ast(X)^\Gamma \to C^\ast(X)^\Gamma \to 0.   \]
We claim that there are natural homomorphisms  
\[ \beta_{i} : K_i(D^\ast(X)^\Gamma/C^\ast(X)^\Gamma) \to K_{i-1} (C_L^\ast( X)^\Gamma). \]
\textbf{Odd case.} That is, $i$ is an odd integer. Suppose $u$ is an invertible element in $D^\ast(X)^\Gamma/C^\ast( X)^\Gamma$ with its inverse $v$. Let $U, V\in D^\ast(X)^\Gamma$ be certain (not necessarily invertible) lifts of $u$ and $v$. Without loss of generality\footnote{by choosing approximations of $U$ and $V$, hence approximations of $u$ and $v$, if necessary}, we assume that $U$ and $V$ have finite propagations.

Recall that for each $n\in \mathbb N$, there exists a  $\Gamma$-invariant locally finite open cover $\{Y_{n, j}\}$ of  $\ X$  such that diameter$(Y_{n, j})< 1/n$ for all $j$. Let $\{\phi_{n, j}\}$ be a $\Gamma$-invariant smooth partition of unity subordinate to $\{Y_{n, j}\}$. If we write 
\[ U(t) = \sum_{j} \left( (1 - (t-n) ) \phi_{n, j}^{1/2} U \phi_{n, j}^{1/2} + (t-n) \phi_{n+1, j}^{1/2} U \phi_{n+1, j}^{1/2}\right) \]
for $t\in [n, n+1]$. Notice that $[U, \phi_{n, j}^{1/2}] \in C^\ast(X)^\Gamma$ . It follows immediately that the path $U(t)$, $0 \leq t <\infty$,  defines a class in $K_1(D_L^\ast( X)^\Gamma/C_L^\ast( X)^\Gamma).$ Then the map $\beta_1$ is defined by
\[  \beta_1([u]) = \partial_1 [U(t)] \in K_0(C_L^\ast( X)^\Gamma),   \]
where $\partial_1 : K_1(D_L^\ast( X)^\Gamma/C_L^\ast( X)^\Gamma) \to  K_0(C_L^\ast( X))^\Gamma $ is the canonical boundary map (cf. Section $\ref{sec:kt}$) in the six-term $K$-theory exact sequence induced by  
\[  0\to C_L^\ast( X)^\Gamma \to D_L^\ast( X)^\Gamma \to D_L^\ast( X)^\Gamma/ C_L^\ast( X)^\Gamma\to 0.\] 
It is not difficult to verify that  $\beta_1$ is well-defined.
%
%

\textbf{Even case.} The even case is parallel to the odd case above.  Suppose $q$ is an idempotent in $D^\ast( X)^\Gamma/C^\ast( X)^\Gamma$. If $Q$ is a lift of $q$, then we define 
\[ Q(t) = \sum_{i} \left( (1 - (t-n) ) \phi_{n, j}^{1/2} Q \phi_{n, j}^{1/2} + (t-n) \phi_{n+1, j}^{1/2} Q \phi_{n+1, j}^{1/2}\right) \]
for $t\in [n, n+1]$ with the same $\phi_{n, j}$ as above. Now the map $\beta_0$ is defined by
\[ \beta_0([q]) = \partial_0 [Q(t) ] \in K_0(C_L^\ast( X)^\Gamma), \]
where $\partial_0 : K_0(D_L^\ast( X)^\Gamma/C_L^\ast( X)^\Gamma) \to  K_1(C_L^\ast( X))^\Gamma $ is the canonical boundary map in the six-term $K$-theory exact sequence induced by  
\[  0\to C_L^\ast( X)^\Gamma \to D_L^\ast( X)^\Gamma \to D_L^\ast( X)^\Gamma/ C_L^\ast( X)^\Gamma\to 0.\]

Now we turn to the maps 
\[ \alpha_{i} : K_i(D^\ast( X)^\Gamma) \to K_{i-1} (C_{L, 0}^\ast( X)^\Gamma). \] 
Suppose  $u$ (resp. $q$) is an invertible element (resp. idempotent) in $D^\ast( X)^\Gamma$. In this case, we can simply choose $U = u$ (resp. $Q = q$).  Let $U(t)$ (resp. $Q(t)$) be as above. Now we apply the construction of the index map $\partial_1$ (resp. $\partial_0$) in Section $\ref{sec:kt}$ to $U(t)$ (resp. $Q(t)$). One immediately sees that the resulting element is an idempotent (resp. invertible element) in 
\[  (C_{L,0}^\ast(X)^\Gamma)^+.\]
We define $\alpha_1(u)$ (resp. $\alpha_0(q)$) to be this element  in  $K_0(C_{L,0}^\ast( X)^\Gamma)$ (resp. $K_1(C_{L,0}^\ast( X)^\Gamma)$). Again it is not difficult to see that $\alpha_i$ is well-defined.


\begin{proposition}\label{prop:coin}
The homomorphisms 
\[ \alpha_i : K_i(D^\ast( X)^\Gamma) \to K_{i-1} (C_{L, 0}^\ast( X)^\Gamma) \]
and
\[ \beta_i : K_i(D^\ast( X)^\Gamma/C^\ast( X)^\Gamma) \to K_{i-1} (C_L^\ast( X)^\Gamma) \] 
are isomorphisms. Moreover, $\alpha_i$ and $\beta_i$ are natural in the sense that the following diagram commutes.
\[ 
\xymatrix {  K_{i+1}( D^\ast( X)^\Gamma ) \ar[r]  \ar[d]^{\alpha_{i+1}}& K_{i+1}(D^\ast( X)^\Gamma/C^\ast( X)^\Gamma) \ar[r] \ar[d]^{\beta_{i+1}} & K_{i}(C^\ast( X)^\Gamma )  \ar[r]\ar[d]^{=} & K_{i}( D^\ast( X)^\Gamma )  \ar[d]^{\alpha_{i}} \\
K_{i}( C^\ast_{L, 0}( X)^\Gamma ) \ar[r] & K_{i}(C^\ast_L( X)^\Gamma) \ar[r] & K_{i}(C^\ast( X)^\Gamma )  \ar[r] & K_{i-1}( C^\ast_{L, 0}( X)^\Gamma ) }
\]

\end{proposition}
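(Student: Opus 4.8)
The plan is to construct an explicit chain map realizing the isomorphisms $\alpha_i$ and $\beta_i$, verify commutativity square by square, and then invoke the five lemma to deduce that $\alpha_i$ and $\beta_i$ are isomorphisms once we know they fit into a morphism of long exact sequences in which the middle vertical map (the identity on $K_i(C^\ast(X)^\Gamma)$) is an isomorphism. The key observation that makes everything run is that the ``evaluation at $0$'' map gives a commutative diagram of short exact sequences
\[
\xymatrix{
0 \ar[r] & C_L^\ast(X)^\Gamma \ar[r] \ar[d] & D_L^\ast(X)^\Gamma \ar[r] \ar[d] & D_L^\ast(X)^\Gamma/C_L^\ast(X)^\Gamma \ar[r] \ar[d] & 0 \\
0 \ar[r] & C^\ast(X)^\Gamma \ar[r] & D^\ast(X)^\Gamma \ar[r] & D^\ast(X)^\Gamma/C^\ast(X)^\Gamma \ar[r] & 0
}
\]
together with the fact (from \cite{MR1451759}) that the evaluation maps $K_i(C_L^\ast(X)^\Gamma) \to K_i(C^\ast(X)^\Gamma)$ and $K_i(D_L^\ast(X)^\Gamma) \to K_i(D^\ast(X)^\Gamma)$ are isomorphisms, because $X$ is a finite dimensional simplicial complex. (Strictly, one uses that $K_i(C^\ast_L(X)^\Gamma)\cong K_i^\Gamma(X)$ and the analogous statement for $D_L^\ast$; the evaluation map on $D_L^\ast$ being an isomorphism follows from a Mayer--Vietoris / $\lim$ argument over skeleta, as in \cite{MR1451759,MR2732068}.)

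First I would make precise the definition of $\alpha_i$ and $\beta_i$ as boundary maps. Given the commuting diagram of extensions above, the naturality of the $K$-theory boundary map yields, for each $i$, a commuting square
\[
\xymatrix{
K_i(D_L^\ast(X)^\Gamma/C_L^\ast(X)^\Gamma) \ar[r]^-{\partial_L} \ar[d]_{\ev_\ast} & K_{i-1}(C_L^\ast(X)^\Gamma) \ar[d]^{\ev_\ast} \\
K_i(D^\ast(X)^\Gamma/C^\ast(X)^\Gamma) \ar[r]^-{\partial} & K_{i-1}(C^\ast(X)^\Gamma)
}
\]
and similarly $\partial_{L,0}\colon K_i(D_L^\ast(X)^\Gamma) \to K_{i-1}(C_{L,0}^\ast(X)^\Gamma)$ (the boundary map of the ``horizontal'' extension $0\to C_{L,0}^\ast(X)^\Gamma \to C_L^\ast(X)^\Gamma \to C^\ast(X)^\Gamma \to 0$, applied after noting $D^\ast_L$ is a multiplier algebra so its elements lift). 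Then I would identify the maps of the proposition as the composites
\[
\beta_i = \partial_L \circ (\ev_\ast)^{-1}\colon K_i(D^\ast(X)^\Gamma/C^\ast(X)^\Gamma) \xrightarrow{\ \cong\ } K_i(D_L^\ast(X)^\Gamma/C_L^\ast(X)^\Gamma) \xrightarrow{\partial_L} K_{i-1}(C_L^\ast(X)^\Gamma),
\]
\[
\alpha_i = \partial_{L,0}\circ (\ev_\ast)^{-1}\colon K_i(D^\ast(X)^\Gamma) \xrightarrow{\ \cong\ } K_i(D_L^\ast(X)^\Gamma) \xrightarrow{\partial_{L,0}} K_{i-1}(C_{L,0}^\ast(X)^\Gamma),
\]
and check that these abstract composites agree, on representatives, with the hands-on formulas in terms of $U(t)$ and $Q(t)$ given in the text; this is a direct unwinding of the definition of the $K$-theory boundary map in Section~\ref{sec:kt}, since $U(t)$ is literally the image under a partition-of-unity ``spreading'' of a lift of $u$, and $\partial_1[U(t)]$ is computed by exactly formula \eqref{eq:keven} applied in the localization algebra.

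Next I would prove that the four-term ladder commutes. The rightmost square (involving $\alpha_i$ and the connecting map $K_i(C^\ast(X)^\Gamma)\to K_{i-1}(C_{L,0}^\ast(X)^\Gamma)$) and the leftmost square (involving $\beta_{i+1}$, $\alpha_{i+1}$ and the inclusion-induced maps) both reduce, after the identifications above, to naturality of $K$-theory boundary maps applied to the commuting diagram of the two extensions $0\to C_{L,0}^\ast\to C_L^\ast \to C^\ast\to 0$ and $0\to C^\ast\to D^\ast\to D^\ast/C^\ast\to 0$ sitting inside the ``big'' picture where $D_L^\ast$ is a multiplier algebra of $C_L^\ast$; concretely one uses that the composition $D^\ast \to D^\ast/C^\ast$ lifts through $D_L^\ast \to D_L^\ast/C_L^\ast$ and that evaluation intertwines everything. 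The middle square is commutative essentially by construction, since the same operator $U(t)$ (or $Q(t)$) is used to compute both connecting maps and the identity map on $K_i(C^\ast(X)^\Gamma)$ appears because $\ev_\ast$ at the level of $C^\ast(X)^\Gamma$ is literally the identity. Finally, once the ladder is shown to commute and the middle vertical arrow is the identity, the five lemma forces $\alpha_i$ and $\beta_i$ to be isomorphisms for all $i$, since the two rows are exact (the top row is the Higson--Roe sequence, the bottom is the localization-algebra sequence).

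The main obstacle I anticipate is not any single square but the verification that $\ev_\ast\colon K_i(D_L^\ast(X)^\Gamma) \to K_i(D^\ast(X)^\Gamma)$ is an isomorphism: unlike the analogous statement for $C_L^\ast$ (which is \cite[Theorem 3.2]{MR1451759}), this requires knowing that $D_L^\ast$ has the right ``local'' behavior, and the cleanest route is a Mayer--Vietoris induction over the skeleta of the finite-dimensional simplicial complex $X$, reducing to the case of a point where $D^\ast$ and $D_L^\ast$ are both (stably) $\ell^2(\Gamma_x)$-type algebras with trivial $K$-theory in the relevant sense, or alternatively deducing it from the already-known $C_L^\ast$ statement together with the long exact sequences and the five lemma applied to the pair $(D_L^\ast, C_L^\ast)$ versus $(D^\ast, C^\ast)$ — but that last route is slightly circular, so I would present the skeletal induction. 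A secondary, more bookkeeping-heavy obstacle is making sure the partition-of-unity construction of $U(t)$ genuinely lands in $D_L^\ast(X)^\Gamma/C_L^\ast(X)^\Gamma$ (i.e.\ that propagation of $U(t)$ tends to $0$ and that $[U,\phi^{1/2}_{n,j}]\in C^\ast(X)^\Gamma$), which is exactly the content sketched in the text and which I would state as a lemma before computing the boundary maps.
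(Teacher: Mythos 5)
The crucial input you rely on is false: the evaluation map $\ev_\ast\colon K_i(C_L^\ast(X)^\Gamma) \to K_i(C^\ast(X)^\Gamma)$ is \emph{not} an isomorphism in general. After the identification $K_i(C_L^\ast(X)^\Gamma)\cong K_i^\Gamma(X)$ of \cite[Theorem 3.2]{MR1451759}, this evaluation map \emph{is} the (equivariant) Baum--Connes assembly map; asserting it to be an isomorphism is asserting the Baum--Connes conjecture. You appear to have conflated Yu's theorem ($K_i(C_L^\ast)\cong K_i^\Gamma(X)$) with a statement about $\ev_\ast$ that it does not make. This defect propagates to your other claims: if both $\ev_\ast\colon K_i(D_L^\ast(X)^\Gamma)\to K_i(D^\ast(X)^\Gamma)$ and $\ev_\ast\colon K_i(D_L^\ast/C_L^\ast)\to K_i(D^\ast/C^\ast)$ were isomorphisms, the five lemma applied to the ladder of extensions $0\to C_L^\ast\to D_L^\ast\to D_L^\ast/C_L^\ast\to 0$ over $0\to C^\ast\to D^\ast\to D^\ast/C^\ast\to 0$ would force $\ev_\ast$ on $C_L^\ast$ to be an isomorphism too, a contradiction. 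So at least one of the two evaluation maps you use fails to be an isomorphism, and the factorizations $\beta_i=\partial_L\circ(\ev_\ast)^{-1}$ and $\alpha_i=\partial_{L,0}\circ(\ev_\ast)^{-1}$ cannot both make sense. (In fact, the expected situation is $K_\ast(D_L^\ast(X)^\Gamma)=0$, making $\partial_L$ an isomorphism, whereas $K_\ast(D^\ast(X)^\Gamma)$ is the analytic structure group and is generically nonzero; so $\ev_\ast$ on $D_L^\ast$ is definitely not an isomorphism.) Finally, your closing invocation of the five lemma is also a misapplication: the five lemma does not let you deduce that all the vertical maps in a commutative ladder of exact sequences are isomorphisms from knowing only that the middle vertical map is one; you need two out of every three consecutive verticals to already be isomorphisms.

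The paper's proof avoids all of this by a different route. It shows directly that $\beta_i$ is an isomorphism via a Mayer--Vietoris induction on the dimension of the finite-dimensional simplicial complex $X$: both $K_\ast(D^\ast(X)^\Gamma/C^\ast(X)^\Gamma)$ and $K_\ast(C_L^\ast(X)^\Gamma)$ have Mayer--Vietoris sequences, $\beta_i$ is a natural transformation between them, and the base case of a $0$-dimensional complex is a direct check. Once the $\beta$'s are isomorphisms, the five lemma is applied to the ladder whose vertical maps are $\beta_{i+1}$, $\mathrm{id}$, $\alpha_i$, $\beta_i$, $\mathrm{id}$ (two of the three outer ones now \emph{known} to be isomorphisms) to conclude that each $\alpha_i$ is an isomorphism. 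If you want to salvage a factorization-style argument, the viable version would be: (1) prove $K_\ast(D_L^\ast(X)^\Gamma)=0$, so $\partial_L$ is an isomorphism; (2) show the partition-of-unity ``spreading'' map $K_i(D^\ast/C^\ast)\to K_i(D_L^\ast/C_L^\ast)$ is a two-sided inverse of $\ev_\ast$ (the fact that it is a one-sided inverse is immediate since $U(0)=U$, but the other direction requires an argument, e.g.\ a homotopy shrinking the localization parameter); then $\beta_i$ is a composite of two isomorphisms. But $\alpha_i$ cannot be factored analogously, precisely because $\ev_\ast$ on $D_L^\ast$ is not an isomorphism, so you still need the five lemma to pass from the $\beta$'s to the $\alpha$'s.
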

\begin{proof}
The commutativity of the diagram follows by the definitions of the maps $\alpha_i$ and $\beta_i$. So we are only left to prove that $\alpha_i$ and $\beta_i$ are isomorphisms.

We first prove that $\beta_i$ is an isomorphism. Notice that we have Mayer-Vietoris sequences for $K$-theory of both $D^\ast(X)^\Gamma/C^\ast(X)^\Gamma$ and $C_L^\ast(X)^\Gamma$ (cf. \cite[Chapter 5]{MR1399087} and \cite{MR1451759}). It is trivial to verify that $\beta_i$ is an isomorphism when $X$ is $0$-dimensional  (i.e. consisting of discrete points). By using Mayer-Vietoris sequence and the five lemma, the general case follows from induction on the dimension of $X$.

Now it follows from the five lemma that $\alpha_i$ is an isomorphism. This finishes the proof.
   
\end{proof}

\begin{remark}
In fact both  $K_{i+1}(D^\ast(X)^\Gamma/C^\ast(X)^\Gamma)$ and $K_i (C_L^\ast(X)^\Gamma)$ are  naturally identified with the $K$-homology group $K_i^\Gamma(X)$ (cf. \cite[Chapter 5]{MR1399087} and \cite{MR1451759}). The map $\beta_i$ equates these two natural identifications.
\end{remark}

\begin{remark}
By following the construction of the isomorphism $\alpha_i$, one immediately sees that our definition of the higher rho invariant (see Section \ref{sec:rho} above) is equivalent to that of Higson and Roe \cite[Definition 7.1]{MR2761858}.
\end{remark}

\begin{remark}
We also have the maximal version of Proposition $\ref{prop:coin}$, by replacing all the $C^\ast$-algebras by their maximal versions.
\end{remark}

\end{document}